\numberwithin{equation}{section}
\newtheorem{theorem}{Theorem}[section]
\newtheorem{lemma}{Lemma}[section]
\theoremstyle{definition}
\newtheorem{dy}{Definition}[section]
\newtheorem{remark}{Remark}[section]
\newdimen\forestdimen
\title{Monge solutions and uniqueness in multi-marginal optimal transport with hierarchical jumps}
\author{Zijian Xu}
\date{}
\begin{document}
	\maketitle
	\begin{abstract}
		We introduce Hierarchical Jump multi-marginal transport (HJMOT), a generalization of multi-marginal optimal transport where mass can ``jump" over intermediate spaces via augmented isolated points. Established on Polish spaces, the framework guarantees the existence of Kantorovich solutions and, under sequential differentiability and a twist condition, the existence and uniqueness of Monge solutions. This core theory extends robustly to diverse settings, including smooth Riemannian  manifolds, demonstrating its versatility as a unified framework for optimal transport across complex geometries.
	\end{abstract}
	\textbf{MSC(2020):} Primary 49Q22; Secondary 35A15.
	
	\section{Introduction}
	\label{sec1}
	The theory of optimal transport (OT) fundamentally addresses the problem of efficiently moving mass between two spaces $X_0$ and $X_K$, minimizing a cost defined on pairs of points from these spaces (see \cite{MR2459454}). The framework of multi-marginal optimal (MOT) theory extends this by introducing intermediate spaces $X_1,\cdots,X_{K-1}$. It requires the mass to traverse a sequential path $X_0 \to X_1 \to \cdots \to X_{K-1} \to X_K$, minimizing a cumulative cost that is aggregated over each adjacent pair of spaces. However, this classical formulation imposes a rigid sequential structure, which may not be optimal in applications where direct jumps between non-adjacent spaces can lead to lower total cost. To model this flexibility, we augment each intermediate space $X_k$ ($1 \le k \le K-1$) with an isolated point $\partial_k$, defining an extended Polish space $\widehat{X}_k = X_k \cup {\partial_k}$. The  origin space and terminal space remain unchanged, i.e. $\widehat{X}_0 := X_0$ and $\widehat{X}_K := X_K$. A path traversing $\partial_k$ signifies that space $X_k$  is skipped in the transport leg between its neighbors. The active indices map $I(\omega)$ extracts the ordered sequence of spaces actually visited (excluding those skipped via 	$\partial_k$) from any path $\displaystyle \omega \in\Omega:=  \prod_{k=0}^{K}\widehat{X}_k$. We leverage this structure in the following way: We define a path cost function $c(\omega)$, which has a form of sums of  the pairwise costs between consecutive active indices identified by $I(\omega)$. The maximum adjacent cost $\tilde{c}_{i,i+1}$ provides an upper bound facilitating analysis. For fix $\mu_0 \in \mathcal{P}(X_0)$ and $\mu_K \in \mathcal{P}(X_k)$, the core Kantorovich formulation seeks a coupling $\pi \in \Pi(\mu_0,\cdots,\mu_K)$ minimizing the expected path cost :
	\begin{equation}
	M:=\inf_{ \substack{ \mu_k\in \mathcal{P}(\widehat{X}_k)\text{ for } k=1,\cdots,\,K-1 \\ \pi \in \Pi(\mu_0,\cdots,\,\mu_K)} } \int_{\Omega}c \ {\rm d}\pi.
	\end{equation}
	 In Section~\ref{sec2}, we establish the foundational theory of HJMOT on Polish spaces. Under a coercivity condition (Definition~\ref{Coercive}) that prevents mass from escaping to infinity, we prove the finiteness of the minimum cost $M$ (Theorem~\ref{Mfinity}) and the existence of a Kantorovich solution $\{\pi^*,\{\mu_k^*\}_{k=1}^{K-1}\}$ (Theorem~\ref{KP}). The proofs rely on the tightness of minimizing sequences and Prokhorov's theorem, demonstrating the robustness of the framework as they depend solely on the Polish space structure and the lower semicontinuity of the cost with coercivity. 
	 \par Building upon this, Section~\ref{sec3} addresses the existence and uniqueness of Monge solutions (deterministic optimal transport maps). We introduce several key concepts: the sequential differentiability (Definition~\ref{sd}) of the cost $c(\omega)$, the sequential twistedness condition (Definition~\ref{st}) requiring injectivity of a certain derivative mapping on optimal support, a strong coercivity condition ensuring compactness of sublevel sets of the cost, and a local control condition (Definition~\ref{lcc}) governing the local behavior of the cost near optimal paths. Under these conditions, Theorem~\ref{MP} guarantees a unique Monge solution $T^*: X_0 \to \Omega$, which explicitly decomposes to show which intermediate spaces are visited ($T^*_k(\widehat{x}_0) \in \widehat{X}_k$) and which are skipped ($T^*_k(\widehat{x}_0) = \partial_k$).
	 \par The core theory developed in Sections~\ref{sec2} and \ref{sec3} is notably versatile, relying only on the underlying Polish space structure. In Section~\ref{sec4}, we concretely realize this abstract framework on smooth Riemannian manifolds. Here, the sequential differentiability and twistedness conditions are naturally adapted to the geometric setting using directional derivatives along geodesics (Definitions~\ref{RKP} and~\ref{RMP}).
	\par The classical Monge problem with quadratic cost has been extensively studied in various geometric settings. In the Euclidean space, Brenier's seminal theorem established that when the source measure is absolutely continuous with respect to the Lebesgue measure, there exists a unique optimal transport map for the squared distance cost \cite{MR1100809}. This breakthrough was later extended to compact Riemannian manifolds by McCann \cite{MR1844080}, who proved existence and uniqueness of optimal maps under the same absolute continuity condition. Subsequently, Figalli \cite{MR2459454} resolved the non-compact case, showing that the problem admits a solution on general complete non-compact Riemannian manifolds.
	\par In Alexandrov spaces with lower curvature bounds, Bertrand \cite{MR2442054} proved the existence and uniqueness of optimal maps for the quadratic cost under the assumption that the source measure is absolutely continuous with respect to the Hausdorff measure, provided the space is finite-dimensional and compact. Although this result represents an important milestone, the Monge problem in fully general Alexandrov spaces remains only partially understood. Beyond the Riemannian and Alexandrov settings, significant advances have been made in more general metric measure spaces. Gigli \cite{MR2984123} obtained analogous results in synthetic Ricci curvature lower bound spaces (${\sf RCD}(K,N)$). Figalli and Rifford \cite{MR2647137} resolved the existence and uniqueness of Monge solutions on sub-Riemannian manifolds for the squared distance cost under the assumption of absolute continuity of the source measure. The present work situates itself within this broad landscape. The proposed framework for the HJMOT problem not only encompasses the aforementioned results of Brenier, McCann, Figalli, and Bertrand, but also aligns with recent developments in general metric measure spaces. In this sense, our results provide a unified perspective that integrates and extends the classical theory of optimal transport across Euclidean, Riemannian, Alexandrov, and synthetic curvature settings.
	\section{Multi-marginal optimal transport with hierarchical jumps.}
	\label{sec2}
	For $K\geq 1$ and $k \in \{0,1,2,\cdots,K\}$, let $X_k$ be a Polish space. First, we extend the  intermediate spaces  by adding an isolated point :
	\[
	\widehat{X}_k:= X_k \cup \{\partial_k\}\quad\text{for}\quad1\leq k \leq K-1.
	\]
	We topologize $\widehat{X}_k$ so that  $\{\partial_k\}$ is  isolated in $\widehat{X}_k$. Consequently, $\widehat{X}_k$ is  a Polish space (see \cite[Section~3]{MR1321597}).\\
	As origin space and terminal space, we shall set $\widehat{X}_0:=X_0$ and $\widehat{X}_K:=X_K$. Fix two Borel probability measures $\mu_0  \in \mathcal{P}(\widehat{X}_0)$ and $\mu_K \in \mathcal{P}(\widehat{X}_K)$, where $\mathcal{P}(\widehat{X}_k)$ denotes the set of all Borel probability measures on $\widehat{X}_k$. We introduce intermediate measures as follows:
	\[
	\mu_k \in \mathcal{P}(\widehat{X}_k) \quad\text{for}\quad 1 \leq k \leq K-1,
	\]
	which will be chosen optimally by the transport problem. We define the path space  as
	\[
	\Omega := \prod_{k=0}^{K}\widehat{X}_k=X_0 \times \left(\prod_{k=1}^{K-1} \widehat{X}_k\right) \times X_K,
	\]
	with product topology. A coupling is a Borel probability measure 
	\[
	\pi \in \mathcal{P}(\Omega)
	\]
	satisfying the following marginal constraints, i.e.
	\[
	{T_k}_\# \pi = \mu_k \quad\text{for}\quad k\in\{ 0,\cdots,K \},
	\]
	where $T_k : \Omega \mapsto \widehat{X}_k$ is the projection map defined by $T_k(\widehat{x}_0,\cdots \widehat{x}_K)=\widehat{x}_k$. For $2 \leq n \leq K+1$, define the set of admissible index sequences of length $n$ as:
	\[
	\Delta_{n}:=\{(k_0,k_1,\cdots,k_{n-1}) \in \mathbb{N}^n\mid  0=k_0<k_1<\cdots<k_{n-1}=K\}.
	\]
	The codomain of the active indices mapping $\mathcal{I}$ is the disjoint union of all such sets as follows:
	\[
	\mathcal{I}:=\bigsqcup_{n=2}^{K+1}\Delta_n.
	\]
	For $\omega=(\widehat{x}_0,\widehat{x}_1,\cdots,\widehat{x}_{K-1},\widehat{x}_K)\in\Omega$, we first define the number of actively visited stages as
	\[
	n(\omega) := \#\{1 \leq k \leq K-1 \mid \widehat{x}_k \in X_k\} + 2,
	\]
	and denote the ordered indices of visited intermediate spaces by
	\[
	\{k_1,k_2,\cdots,k_{n-2}\} := \{k \in \{1,2,\cdots,K-1\} \mid \widehat{x}_k \in X_k\},
	\]
	with the natural ordering $k_1 < k_2 < \cdots < k_{n-2}$. The active indices mapping $I: \Omega \to \mathcal{I}$ is then defined as
	\[
	I(\omega) := (n(\omega); (0,k_1,k_2,\cdots,k_{n-2},K)) \in \{n(\omega)\} \times \Delta_{n(\omega)} \subset \mathcal{I}.
	\]
	This mapping encodes the sequence of stage indices that the path $\omega$ actually traverses, always including the initial stage $0$ and terminal stage $K$. We define the path extraction map $\displaystyle\phi: \mathcal{I} \to \bigcup_{n=2}^{K+1} \left( \prod_{j=0}^{n-1} X_{k_j} \right)$ as \\
	\[
	\phi(n(\omega);(k_0,k_1,\cdots,k_{n-1})):=(x_{k_0},x_{k+1},\cdots,x_{k_{n-1}}),
	\]
	and we set $\Psi(\omega) := \phi(I(\omega))$. Given a family of lower semicontinuous pairwise cost functions 
	\[
	\{c_{i,j} :X_i \times X_j \to [0,+\infty] \  | \ 0\leq i<j\leq K   \},
	\]
	we define the path cost as
	\[
	c(\omega):=
	\sum_{l=0}^{n-2}c_{k_l,k_{l+1}}(x_{k_l},x_{k_{l+1}})\quad\text{for}\quad\Psi(\omega)=(x_{k_0},
	\cdots,x_{k_{n-1}}).
	\]
	For each $i=0,\cdots,K-1$, with $\widehat{x}_i\in \widehat{X}_i$ and $\widehat{x}_{i+1} \in \widehat{X}_{i+1}$, we define the maximum adjacent cost by
	\[
	\tilde{c}_{i,i+1}(\widehat{x}_i,\widehat{x}_{i+1}):=\left\{
	\begin{aligned}
		&c_{i,i+1}(\widehat{x}_i,\widehat{x}_{i+1}),&\quad&\text{if}\quad\widehat{x}_i\in X_i \quad\text{and}\quad\widehat{x}_{i+1} \in X_{i+1},\\
		&\sup_{\substack{i+1 < j \leq K\\ \widehat{x}_j\in X_j}}c_{i,j}(\widehat{x}_i,\widehat{x}_j),&\quad&\text{if}\quad\widehat{x}_i \in X_i \quad\text{and}\quad \widehat{x}_{i+1} =\partial_{i+1},\\
		&0,&\quad&\text{otherwise}.
	\end{aligned}
	\right.
	\]
	It follows that $\displaystyle c(\omega)\leq\sum_{i=0}^{K-1}\tilde{c}_{i,i+1}(\widehat{x}_i,\widehat{x}_{i+1})$ for every $\omega \in \Omega$. We fix $\mu_0\in\mathcal{P}(X_0)$ and $\mu_K \in \mathcal{P}(X_K)$. We seek to solve the following  optimization problem 
	
	\begin{equation}
		\label{KS}
		M:=\inf_{ \substack{ \mu_k\in \mathcal{P}(\widehat{X}_k)\text{ for }k=1,\cdots,K-1 \\ \pi \in \Pi(\mu_0,\cdots,\,\mu_K)} } \int_{\Omega} c(\omega) \ \pi({\rm d} \omega),
	\end{equation}
	where $\Pi(\mu_0,\cdots,\mu_K)=\{ \pi \in \mathcal{P}(\Omega) \mid {T_k}_\# \pi = \mu_k, \text{ for }k\in\{0,\cdots,K\}  \}$.
	In classical multi-marginal optimal transport (MOT), the intermediate marginals are fixed in advance, so the admissible coupling set is automatically tight (see Lemma~\ref{oldtight} below). In contrast, in our HJMOT setting, the intermediate marginals may vary with the sequence, and without additional control the mass could escape to infinity, preventing compactness. To rule out such escape and to guarantee tightness, we introduce the following condition.
	\begin{dy}[Coercivity]
		\label{Coercive}
	We say that cost function $c$ is \textit{coercive} if there exists a measurable function $\Phi: \Omega \to [0, +\infty]$  such that 
		\begin{itemize}
			\item[$(1)$] For every $R > 0$, the sublevel set $\{\omega \in \Omega : \Phi(\omega) \leq R\}$ is compact;
			\item[$(2)$] There exists a minimizing sequence $\{\pi_n\}$ of (\ref{KS}) such that $\displaystyle\sup_n \int_\Omega \Phi \, d\pi_n < \infty.$
		\end{itemize}
	\end{dy}
	\begin{remark}[Reduction to standard MOT]
		When no isolated points are introduced in the intermediate spaces (i.e., $\widehat{X}_k = X_k$ for all $k=1,\cdots,K-1$), the framework of HJMOT reduces to the standard multi-marginal optimal transport (MOT) problem. In this case, the active index map $I(\omega)$ necessarily satisfies $I(\omega) = \{0,1,\cdots,K\}$ for all paths $\omega \in \Omega$, forcing sequential transport through each intermediate space. The cost function $c(\omega)$ is then simplified to the classical MOT form: 
		\[
		c(\omega) := \sum_{i=0}^{K-1} c_{i,i+1}(x_i, x_{i+1}).
		\]
		Thus, HJMOT is a strict generalization of MOT, allowing mass to bypass certain intermediate spaces via hierarchical jumps
	\end{remark}
	\begin{theorem}
		\label{Mfinity}
		Fix $\mu_0 \in \mathcal{P}(X_0)$ and $\mu_K \in \mathcal{P}(X_K)$. If there exist intermediate measures   $\mu_k \in \mathcal{P}(\widehat{X}_k)$ for $k=1,\cdots,K-1$ such that
		\[
		\sum_{i=0}^{K-1}\tilde{C}_{i,i+1}(\mu_{i},\mu_{i+1}) < \infty,
		\] 
		where $\displaystyle\tilde{C}_{i,i+1}(\mu_i,\mu_{i+1}):=\inf_{\pi_i\in\Pi(\mu_i,\mu_{i+1})} \int_{\widehat{X}_i \times \widehat{X}_{i+1}} \tilde{c}_{i,i+1}(\widehat{x}_i,\widehat{x}_{i+1})\pi_i({\rm d}\widehat{x}_i,{\rm d} \widehat{x}_{i+1})$, then the optimal value $M$ in  (2.1) is finite.
	\end{theorem}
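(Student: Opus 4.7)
The plan is to construct an explicit admissible coupling $\pi \in \Pi(\mu_0,\dots,\mu_K)$ whose cost integral is dominated by the given finite sum of pairwise optimal values, yielding $M < \infty$ directly as an upper bound. The key inequality already noted in the excerpt, namely $c(\omega) \leq \sum_{i=0}^{K-1} \tilde{c}_{i,i+1}(\widehat{x}_i,\widehat{x}_{i+1})$, reduces the problem to exhibiting a single coupling whose \emph{adjacent} two-dimensional marginals are (approximately) optimal for the $\tilde{c}_{i,i+1}$ costs.

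Concretely, fix the intermediate measures $\mu_1,\dots,\mu_{K-1}$ given by hypothesis, and let $\varepsilon > 0$. For each $i \in \{0,\dots,K-1\}$, use the definition of $\tilde{C}_{i,i+1}(\mu_i,\mu_{i+1})$ as an infimum to pick a coupling $\pi_i \in \Pi(\mu_i,\mu_{i+1})$ with
\[
\int_{\widehat{X}_i \times \widehat{X}_{i+1}} \tilde{c}_{i,i+1}(\widehat{x}_i,\widehat{x}_{i+1})\,\pi_i(\mathrm{d}\widehat{x}_i,\mathrm{d}\widehat{x}_{i+1}) \leq \tilde{C}_{i,i+1}(\mu_i,\mu_{i+1}) + \varepsilon/K.
\]
Next, I would invoke the standard gluing lemma on Polish spaces: since each $\widehat{X}_k$ is Polish, each $\pi_i$ admits a disintegration $\pi_i(\mathrm{d}\widehat{x}_{i+1}\mid \widehat{x}_i)$ with respect to its first marginal $\mu_i$, and by iteratively composing these kernels one obtains
\[
\pi(\mathrm{d}\widehat{x}_0,\dots,\mathrm{d}\widehat{x}_K) := \mu_0(\mathrm{d}\widehat{x}_0)\,\prod_{i=0}^{K-1}\pi_i(\mathrm{d}\widehat{x}_{i+1}\mid \widehat{x}_i),
\]
a Borel probability measure on $\Omega$ whose consecutive two-dimensional marginals $(T_i,T_{i+1})_\#\pi$ coincide with $\pi_i$, and hence whose one-dimensional marginals agree with $\mu_0,\dots,\mu_K$. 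Thus $\pi \in \Pi(\mu_0,\dots,\mu_K)$ is admissible for the variational problem (2.1).

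Finally, integrating the bound $c(\omega) \leq \sum_{i=0}^{K-1}\tilde{c}_{i,i+1}(\widehat{x}_i,\widehat{x}_{i+1})$ against $\pi$ and using that each pairwise marginal of $\pi$ is $\pi_i$ gives
\[
M \leq \int_\Omega c \,\mathrm{d}\pi \leq \sum_{i=0}^{K-1}\int_{\widehat{X}_i\times\widehat{X}_{i+1}} \tilde{c}_{i,i+1}\,\mathrm{d}\pi_i \leq \sum_{i=0}^{K-1}\tilde{C}_{i,i+1}(\mu_i,\mu_{i+1}) + \varepsilon,
\]
and letting $\varepsilon \to 0$ yields the finiteness of $M$. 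The main technical point is ensuring that the iterated gluing is well-defined across the extended spaces $\widehat{X}_k$; this is the step where I would be most careful, but the isolated-point topology makes each $\widehat{X}_k$ Polish (as noted in the excerpt), so the disintegration theorem and the inductive construction of $\pi$ proceed exactly as in the classical MOT setting. No subtlety of the active-indices map $I(\omega)$ is required here, since $c$ is controlled pointwise by the sum of $\tilde{c}_{i,i+1}$.
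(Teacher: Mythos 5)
Your proposal is correct and follows essentially the same route as the paper: choose $\varepsilon$-optimal pairwise couplings $\pi_i\in\Pi(\mu_i,\mu_{i+1})$, glue them via iterated disintegration into a global $\pi\in\Pi(\mu_0,\dots,\mu_K)$ with $(T_i,T_{i+1})_\#\pi=\pi_i$, and integrate the pointwise bound $c(\omega)\le\sum_{i=0}^{K-1}\tilde{c}_{i,i+1}(\widehat{x}_i,\widehat{x}_{i+1})$. The only difference is presentational: the paper writes out the inductive gluing and the marginal verifications explicitly rather than citing the gluing lemma.
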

	\begin{proof}
		For $i=0,\cdots,K-1$ and every $\varepsilon_i>0$, there exists $\pi_i\in\Pi(\mu_i,\mu_{i+1})$ such that
		\[
		\int_{\widehat{X}_i \times \widehat{X}_{i+1}} \tilde{c}_{i,i+1}(\widehat{x}_i,\widehat{x}_{i+1})\ \pi_i({\rm d}\widehat{x}_i,{\rm d}\widehat{x}_{i+1}) \leq \tilde{C}_{i,i+1}(\mu_i,\mu_{i+1})+\varepsilon_i.
		\]
		Take $\pi_0\in\Pi(\mu_0,\mu_1)$ on $\widehat{X}_0 \times \widehat{X}_1$ and $\pi_1 \in \Pi(\mu_1,\mu_2)$ on $\widehat{X}_1\times \widehat{X}_2$. By the disintegration theorem, we disintegrate $\pi_1$ with respect to its $\widehat{X}_1$–marginal $\mu_1$, obtaining
		\[
		\pi_1({\rm d}\widehat{x}_1,{\rm d}\widehat{x}_2)=\mu_1({\rm d}\widehat{x}_1)\otimes \kappa_1({\rm d}\widehat{x}_2\mid\widehat{x}_1),
		\]
		where $\kappa_i(\cdot\mid\widehat{x}_i)$  is the conditional probability measure of $\widehat{x}_{i+1}$  given $\widehat{x}_i$. Next, combine $\pi_0$ and $\pi_1$ to construct $\pi_{01} \in \mathcal{P}(\widehat{X}_0\times \widehat{X}_1\times \widehat{X}_2)$ defined by
		\[
		\pi_{01}({\rm d}\widehat{x}_0,{\rm d}\widehat{x}_1,{\rm d}\widehat{x}_2):= \pi_0({\rm d}\widehat{x}_0,{\rm d}\widehat{x}_1)\otimes \kappa_1({\rm d}\widehat{x}_2\mid\widehat{x}_1).
		\]
		Now assume we have constructed $\pi_{0\cdots i}({\rm d}\widehat{x}_0,\cdots,{\rm d}\widehat{x}_{i+1})=\pi_{0\cdots (i-1)}({\rm d}\widehat{x}_0,\cdots,{\rm d}\widehat{x}_i)\otimes \kappa_i({\rm d}\widehat{x}_{i+1}\mid{\rm d}\widehat{x}_i) \in \mathcal{P}(\prod_{j=0}^{i+1}(\widehat{X}_j))$ with pairwise marginals $(T_j,T_{j+1})_\#\pi_{0\cdots i}=\pi_j$ for $j=0,\cdots,i$ and ${T_k}_\# \pi_{0\cdots i} = \mu_k$ for $k=0,\cdots,i+1$. Then we take $\pi_{i+1}\in \Pi(\mu_{i+1},\mu_{i+2})$ and disintegrate $\pi_{i+1}$ with respect to $\mu_{i+1}$ as
		\[
		\pi_{i+1}({\rm d}\widehat{x}_{i+1},{\rm d}\widehat{x}_{i+2})=\mu_{i+1}({\rm d}\widehat{x}_{i+1}) \otimes \kappa_{i+1}({\rm d}\widehat{x}_{i+2}\mid\widehat{x}_{i+1}).
		\]
		We can now extend $\pi_{0\cdots i}$ to $\pi_{0\cdots (i+1)} \in \mathcal{P}(\prod_{j=0}^{i+2}(\widehat{X}_j))$ by
		\[
		\pi_{0\cdots (i+1)}({\rm d}\widehat{x}_0,\cdots {\rm d}\widehat{x}_{i+2})=\pi_{0\cdots i}({\rm d}\widehat{x}_0,\cdots,{\rm d}\widehat{x}_{i+1}) \otimes \kappa_{i+1}({\rm d}\widehat{x}_{i+2}\mid{\rm d}\widehat{x}_{i+1}).
		\]
		For each $k=0,\cdots,i+1$ and any Borel set $A_k\subset \widehat{X}_k$, 
		\[
		\begin{aligned}
			{T_k}_\#\pi_{0\cdots (i+1)}(A_k)&=\int_{\prod_{j=0}^{i+2} \widehat{X}_j} \mathbbm{1}_{A_k}(\widehat{x}_k) \pi_{0\cdots (i+1)}({\rm d}\widehat{x}_0,\cdots,{\rm d}\widehat{x}_{i+2})\\
			&=\int_{\prod_{j=0}^{i+2} \widehat{X}_j} \mathbbm{1}_{A_k}(\widehat{x}_k) \pi_{0\cdots i}({\rm d}\widehat{x}_0,\cdots,{\rm d}\widehat{x}_{i+1})\otimes \kappa_{i+1}({\rm d}\widehat{x}_{i+2}\mid \widehat{x}_{i+1})\\
			&=\int_{\prod_{j=0}^{i+1} \widehat{X}_j} \mathbbm{1}_{A_k}(\widehat{x}_k) \pi_{0\cdots i}({\rm d}\widehat{x}_0,\cdots,{\rm d}\widehat{x}_{i+1}) \cdot \int_{\widehat{X}_{i+2}}  \kappa_{i+1}({\rm d}\widehat{x}_{i+2}\mid \widehat{x}_{i+1})\\
			&={T_k}_\#\pi_{0\cdots i}(A_k) \times 1=\mu_k(A_k).
		\end{aligned}
		\]
		Thus, ${T_k}_\# \pi_{0\cdots (i+1)}=\mu_k$ for $k=0,\cdots,i+1$. For each $k=i+2$, and any Borel set $A_{i+2}\subset \widehat{X}_{i+2}$ 
		\[
		\begin{aligned}
			{T_{i+1}}_\#\pi_{0\cdots (i+2)}(A_{i+2})&=\int_{\prod_{j=0}^{i+2} \widehat{X}_{j}} \mathbbm{1}_{A_{i+2}}(\widehat{x}_{i+2}) \pi_{0\cdots (i+1)}({\rm d}\widehat{x}_0,\cdots,{\rm d}\widehat{x}_{i+2})\\
			&=\int_{\prod_{j=0}^{i+2} \widehat{X}_j} \mathbbm{1}_{A_{i+2}}(\widehat{x}_{i+2}) \pi_{0\cdots i}({\rm d}\widehat{x}_0,\cdots,{\rm d}\widehat{x}_{i+1})\otimes \kappa_{i+1}({\rm d}\widehat{x}_{i+2}\mid \widehat{x}_{i+1})\\
			&=\int_{\prod_{j=0}^{i+1}\widehat{X}_j}    \pi_{0\cdots i}({\rm d}\widehat{x}_0,\cdots,{\rm d}\widehat{x}_{i+1})\int_{A_{i+2}} \kappa_{i+1}({\rm d}\widehat{x}_{i+2}\mid \widehat{x}_{i+1}) \\
			&=\int_{\widehat{X}_{i+1} \times A_{i+2}}\mu_{i+1}({\rm d}\widehat{x}_{i+1})\otimes\kappa_{i+1}({\rm d}\widehat{x}_{i+2}\mid \widehat{x}_{i+1})\\
			&=\int_{\widehat{X}_{i+1} \times A_{i+2}} \pi_{i+1}({\rm d}\widehat{x}_{i+1},{\rm d}\widehat{x}_{i+2})=\pi_{i+1}\left( \widehat{X}_{i+1}\times A_{i+2}\right)=\mu_{i+2}(A_{i+2}).\\
		\end{aligned}
		\]
		Thus, ${T_k}_\# \pi_{0\cdots (i+1)}=\mu_k$ for $k=0,\cdots,i+2$. For each $j=0,\cdots,i$ and any Borel set $A_j\subset\widehat{X}_j$, $A_{j+1} \subset \widehat{X}_{j+1}$, 
		\[
		\begin{aligned}
			(T_j,T_{j+1})_\#\pi_{0\cdots j}(A_j,A_{j+1})&=\int_{\prod_{k=0}^{j+1}\widehat{X}_k} \mathbbm{1}_{A_j}(\widehat{x}_j)\mathbbm{1}_{A_{j+1}}(\widehat{x}_{j+1}) \pi_{0\cdots j}({\rm d}\widehat{x}_0,\cdots,{\rm d}\widehat{x}_{j+1})\\
			&=\int_{\widehat{X}_0}\cdots\int_{\widehat{X}_j}\mathbbm{1}_{A_j}(\widehat{x}_j)\left\{\int_{\widehat{X}_{j+1}} \mathbbm{1}_{A_{j+1}}(\widehat{x}_{j+1})\kappa_j({\rm d}\widehat{x}_{j+1}\mid\widehat{x}_j)\right\} \pi_{0\cdots (j-1)}({\rm d}\widehat{x}_0,\cdots,{\rm d}\widehat{x}_j)\\
			&=\int_{\widehat{X}_0}\cdots\int_{\widehat{X}_j}\mathbbm{1}_{A_j}(\widehat{x}_j) \kappa_j(A_{j+1}\mid\widehat{x}_j) \pi_{0\cdots (j-1)}({\rm d}\widehat{x}_0,\cdots,{\rm d}\widehat{x}_j)\\
			&=\int_{A_j}\int_{A_{j+1}}\mu_j({\rm d}\widehat{x}_j) \kappa_{j}({\rm d}\widehat{x}_{j+1}\mid x_j)\\
			&=\int_{A_j}\int_{A_{j+1}} \pi_j({\rm d}\widehat{x}_j,{\rm d}\widehat{x}_{j+1})=\pi_j(A_j \times A_{j+1}).
		\end{aligned}
		\]
		Thus, $(T_j,T_{j+1})_\#\pi_{0\cdots j}=	\pi_j$ for $j=0,\cdots,i$. For each $j = 0,\cdots,i$, extending $\pi_{0\cdots j}$ to $\pi_{0\cdots(j+1)}$ does not alter the previously established pairwise marginal $(T_j,T_{j+1})_\#\pi_{0\cdots j} = \pi_j$. Indeed, for any Borel sets $A_j \subset \widehat{X}_j$ and $A_{j+1} \subset \widehat{X}_{j+1}$, we have 
		\[
		\begin{aligned}
			(T_j,T_{j+1})_\# \pi_{0\cdots(j+1)}(A_j \times A_{j+1})
			&= \int_{\prod_{k=0}^{j+1}\widehat{X}_k} \mathbbm{1}_{A_j}(\widehat{x}_j)\,\mathbbm{1}_{A_{j+1}}(\widehat{x}_{j+1})\,\pi_{0\cdots(j+1)}({\rm d}\widehat{x}_0,\cdots,{\rm d}\widehat{x}_{j+1})\\
			&= \int_{\widehat{X}_0}\cdots\int_{\widehat{X}_{j+1}} \mathbbm{1}_{A_j}(\widehat{x}_j)\,\mathbbm{1}_{A_{j+1}}(\widehat{x}_{j+1})\,\pi_{0\cdots j}({\rm d}\widehat{x}_0,\cdots,{\rm d}\widehat{x}_{j+1})\\
			&= \pi_j(A_j \times A_{j+1})\,.
		\end{aligned}
		\]
		Therefore, after extending to $j+1$, we still have $(T_j,T_{j+1})_\#\pi_{0\cdots(j+1)} = \pi_j$. In summary, after iterating up to $i=K-1$, we construct $\pi:=\pi_{0\cdots K} \in \mathcal{P}(\Omega)$ with $(T_j,T_{j+1})_\#\pi = \pi_j$ for all $0 \le j < K$ and ${T_k}_\#\pi = \mu_k$ for all $0 \le k \le K$. By taking $i=K-1$, we obtain the global coupling  $\pi:=\pi_{0\cdots K-1} \in \mathcal{P}(\Omega)$. Note that each $\tilde{c}_{i,i+1}(\widehat{x}_i,\widehat{x}_{i+1})$ depends only on the pair $(\widehat{x}_i,\widehat{x}_{i+1})$ and $(T_i,T_{i+1})_\#\pi=\pi_i$. Therefore, using  $\displaystyle c(\omega)\leq\sum_{i=0}^{K-1}\tilde{c}_{i,i+1}(\widehat{x}_i,\widehat{x}_{i+1})$, we have
		\[
		\begin{aligned}
			M\leq \int_{\Omega}c(\omega)\pi ({\rm d}\omega)&=\sum_{i=0}^{K-1}\int_{\widehat{X}_i\times \widehat{X}_{i+1}} \tilde{c}_{i,i+1}(\widehat{x}_i,\widehat{x}_{i+1})\pi_i({\rm d}\widehat{x}_i,{\rm d}\widehat{x}_{i+1})\\
			&\leq \sum_{i=0}^{K-1} \left(\tilde{C}_{i,i+1}(\mu_i,\mu_{i+1})+\varepsilon_i \right)<\infty,
		\end{aligned} 
		\]
		which shows $M<\infty$.
	\end{proof}
	\begin{lemma}
		\label{lsc}
		$c: \Omega \to [0,+\infty]$ is lower semicontinuous.
	\end{lemma}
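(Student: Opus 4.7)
The plan is to exploit the isolated-point topology on each $\widehat{X}_k$ to reduce the claim to a finite-sum lower semicontinuity statement on the original Polish spaces. Because $\{\partial_k\}$ is isolated in $\widehat{X}_k$, both $X_k$ and $\{\partial_k\}$ are clopen in $\widehat{X}_k$, so any sequence converging to a point in $X_k$ eventually lies in $X_k$, while any sequence converging to $\partial_k$ is eventually equal to $\partial_k$. Propagating this coordinatewise through the product topology on $\Omega$, I will conclude that the active-index map $I$ is locally constant.

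I would carry this out in three short steps. First, fix $\omega \in \Omega$ with $I(\omega) = (k_0, \ldots, k_{m-1})$ where $m = n(\omega)$, and take an arbitrary sequence $\omega^n \to \omega$ in $\Omega$; by the clopen-piece observation applied componentwise, there exists $N$ such that $I(\omega^n) = I(\omega)$ for every $n \geq N$. Second, by continuity of the projections $T_{k_l} : \Omega \to \widehat{X}_{k_l}$, one has $x_{k_l}^n \to x_{k_l}$ in $X_{k_l}$ for each $l = 0, \ldots, m-1$, and for $n \geq N$ the path cost collapses to the same finite sum
\[
c(\omega^n) = \sum_{l=0}^{m-2} c_{k_l,k_{l+1}}\bigl(x_{k_l}^n, x_{k_{l+1}}^n\bigr).
\]
Third, I would apply lower semicontinuity of each pairwise cost $c_{k_l,k_{l+1}}$ on $X_{k_l} \times X_{k_{l+1}}$ termwise (legitimate because $m$ is finite and the summands are nonnegative elements of $[0,+\infty]$) to deduce $\liminf_{n\to\infty} c(\omega^n) \geq c(\omega)$. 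Since $\Omega$ is a finite product of Polish spaces, it is metrizable, so sequential lower semicontinuity is equivalent to lower semicontinuity, completing the proof.

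The main conceptual concern is that $c$ is defined by a case-by-case formula driven by the discontinuous-looking active-index map $I$, so one might fear that lower semicontinuity could fail at configurations where $I$ appears to ``jump'' between sequences with different numbers of active coordinates. This concern is eliminated precisely by the isolated-point topology on each $\widehat{X}_k$, which forbids such transitions inside a convergent sequence; once this observation is in place, no genuine obstacle remains and the argument is a routine finite-sum semicontinuity calculation using only the assumed lower semicontinuity of the $c_{i,j}$ and the continuity of the coordinate projections.
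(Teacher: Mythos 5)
Your proof is correct and follows essentially the same route as the paper: show that the active-index map is eventually constant along any convergent sequence (the paper asserts this tersely; you justify it via the clopen decomposition forced by the isolated points), then apply termwise lower semicontinuity of the pairwise costs to the resulting fixed finite sum. No issues.
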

	\begin{proof}
		For $\omega \in \Omega$, take $\{\omega_n\} \subset \Omega$ such that $\omega_n \to \omega$ as $n \to \infty$. If $\displaystyle c(\omega)=\sum_{l=0}^{n-2}c_{k_l,k_{l+1}}(x_{k_l},x_{k_{l+1}})$ for $\Psi(\omega)=(x_{k_0},\cdots,x_{k_{n-1}})$ (e.g., $\omega =(\widehat{x}_0 ,\cdots,\partial_i,\cdots,\widehat{x}_K)$ for some $i=1,\cdots,K-1$), then for sufficiently large $n$, $\displaystyle c(\omega_n)= \sum_{l=0}^{n-2}c_{k_l,k_{l+1}}(x_{k_l}^n,x_{k_{l+1}}^n)$. Therefore
		\[
		\varliminf_{n\to \infty} c(\omega_n) =\varliminf_{n\to \infty} \sum_{l=0}^{n-2}c_{k_l,k_{l+1}}(x_{k_l}^n,x_{k_{l+1}}^n) \geq \sum_{l=0}^{n-2}  \varliminf_{n\to \infty} c_{k_l,k_{l+1}}(x_{k_l}^n,x_{k_{l+1}}^n)\geq \sum_{l=0}^{n-2}c_{k_l,k_{l+1}}(x_{k_l},x_{k_{l+1}})=c(\omega).
		\]
	\end{proof}
	\begin{lemma}
		\label{oldtight}
		If each $\widehat{X}_k$ $(k=0,\cdots,K)$ is a Polish space and fix $\mu_k \in \mathcal{P}(\widehat{X}_k)$, then the family of couplings $\Pi(\mu_0,\cdots,\mu_K)$ is uniformly tight.
	\end{lemma}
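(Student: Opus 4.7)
The plan is to exploit the fact that on a Polish space every Borel probability measure is tight (Ulam's theorem), and then build a compact set in $\Omega$ componentwise via Tychonoff's theorem, bounding the escape mass by a union bound over the marginal projections.

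First, I would fix $\varepsilon>0$ and, using the tightness of each individual $\mu_k$ (which is automatic since $\widehat{X}_k$ is Polish), choose for every $k\in\{0,\dots,K\}$ a compact set $\mathcal{K}_k\subset \widehat{X}_k$ such that
\[
\mu_k\bigl(\widehat{X}_k\setminus \mathcal{K}_k\bigr) < \frac{\varepsilon}{K+1}.
\]
Then set $\mathcal{K}:=\prod_{k=0}^{K}\mathcal{K}_k\subset \Omega$. By Tychonoff's theorem $\mathcal{K}$ is compact in the product topology of $\Omega$, and it is independent of the particular coupling $\pi$.

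Next, for any $\pi\in\Pi(\mu_0,\dots,\mu_K)$, I would observe the inclusion
\[
\Omega\setminus \mathcal{K}\;\subset\;\bigcup_{k=0}^{K} T_k^{-1}\bigl(\widehat{X}_k\setminus \mathcal{K}_k\bigr),
\]
since a point of $\Omega$ lies outside $\mathcal{K}$ iff at least one of its coordinates falls outside the corresponding $\mathcal{K}_k$. Using the marginal constraint ${T_k}_{\#}\pi=\mu_k$ together with subadditivity of $\pi$, this gives
\[
\pi(\Omega\setminus \mathcal{K})\;\leq\;\sum_{k=0}^{K}\pi\bigl(T_k^{-1}(\widehat{X}_k\setminus \mathcal{K}_k)\bigr)\;=\;\sum_{k=0}^{K}\mu_k\bigl(\widehat{X}_k\setminus \mathcal{K}_k\bigr)\;<\;\varepsilon.
\]
Since the compact set $\mathcal{K}$ depends only on $\varepsilon$ and on the fixed marginals $\mu_0,\dots,\mu_K$ but not on the particular $\pi\in\Pi(\mu_0,\dots,\mu_K)$, this establishes uniform tightness.

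I do not anticipate any real obstacle here; the only subtle point is making sure that the isolated points $\partial_k$ cause no trouble, but since $\widehat{X}_k = X_k\cup\{\partial_k\}$ with $\{\partial_k\}$ isolated is again Polish (as noted in the construction), Ulam's theorem applies directly and the argument proceeds exactly as in the classical MOT case. Thus the lemma reduces to a clean combination of Ulam's theorem, Tychonoff's theorem, and the elementary union bound on marginal projections.
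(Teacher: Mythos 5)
Your proof is correct and follows essentially the same route as the paper: choose compact sets $\mathcal{K}_k$ with $\mu_k(\widehat{X}_k\setminus\mathcal{K}_k)<\varepsilon/(K+1)$ via tightness on Polish spaces, take the product compact set, and bound the escape mass by the union bound over the marginal projections. The only difference is that you name Ulam's and Tychonoff's theorems explicitly, which the paper leaves implicit.
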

	\begin{proof}
		Since each $\widehat{X}_k$ is a Polish space, every probability measure $\mu_k \in \mathcal{P}(\widehat{X}_k)$ is tight, i.e., for every $\varepsilon>0$, there exists a compact set $A_k \subset X_k$ such that $\mu_k( \widehat{X}_k \setminus A_k) < \varepsilon$. Define a compact subset  $\widehat{A} \subset\Omega$ by
		\[
		\widehat{A}:= \prod_{k=0}^{K}  A_k ,
		\]
		choosing $\mu_k(\widehat{X}_k \setminus A_k) < \varepsilon/(K+1)$ for $k=0,\cdots,K$. Then, for every $\pi \in \Pi(\mu_0,\cdots,\mu_K)$, we have
		\[
		\begin{aligned}
			\pi(\Omega \setminus \widehat{A})&\leq \pi\left( T_0^{-1}(\widehat{X}_0\setminus A_0)  \cup  \cdots \cup T_K^{-1}(\widehat{X}_K\setminus A_K)\right)\\
			&\leq \sum_{k=0}^{K} \pi (T_k^{-1} (\widehat{X}_k \setminus A_k))\\
			&=\sum_{k=0}^{K} \mu_k(\widehat{X}_k \setminus A_k)\\
			&<\varepsilon,
		\end{aligned}
		\]
		showing that $\Pi(\mu_0,\dots,\mu_K)$ is uniformly tight.
	\end{proof}
	\begin{lemma}
		\label{newtight}
		If the  cost function $c$ is coercive, then there exists a minimizing sequence $\{\pi_n\}$ of (\ref{KS}) is uniformly tight.
	\end{lemma}
	\begin{proof}
		By the coercivity condition, there exists a measurable function $\Phi: \Omega \to [0, +\infty]$ and a constant $C > 0$ such that 
		\begin{itemize}
			\item[$(1)$] For every $R > 0$, the sublevel set $\{\omega \in \Omega\mid \Phi(\omega) \leq R\}$ is compact;
			\item[$(2)$] There exists a minimizing sequence $\{\pi_n\}$ of (\ref{KS}) such that $\displaystyle\sup_n \int_\Omega \Phi \, d\pi_n = C.$
		\end{itemize}
		For any $\varepsilon > 0$, set $\displaystyle R_\varepsilon = \frac{2C}{\varepsilon}$. Consider the set $K_\varepsilon = \{\omega \in \Omega \mid\Phi(\omega) \leq R_\varepsilon\}$, by condition $(1)$ , $K_\varepsilon$ is compact. For each $n$, we use Markov's inequality as
		\[
		\pi_n(\{\omega \in \Omega : \Phi(\omega) > R_\varepsilon\}) \leq \frac{1}{R_\varepsilon} \int_\Omega \Phi \, d\pi_n.
		\]
		Using condition (2), we have
		\[
		\pi_n(\{\omega \in \Omega : \Phi(\omega) > R_\varepsilon\}) \leq  \frac{C}{R_\varepsilon} = \frac{\varepsilon}{2}.
		\]
		Note that $\Omega \setminus K_\varepsilon = \{\omega \in \Omega \mid \Phi(\omega) > R_\varepsilon\}$, 
		so we have $\pi_n(\Omega \setminus K_\varepsilon) \leq \frac{\varepsilon}{2} \quad \text{for every } n$. Therefore
		\[
		\sup_n \pi_n(\Omega \setminus K_\varepsilon) < \varepsilon.
		\]
		Since $\varepsilon > 0$ was arbitrary, and for each $\varepsilon$ we have constructed a compact set $K_\varepsilon \subset \Omega$ such that $\sup_n \pi_n(\Omega \setminus K_\varepsilon) < \varepsilon$, it follows that $\{\pi_n\}$ is uniformly tight.
	\end{proof}
    \begin{theorem}[Kantorovich solution]
    	\label{KP}
    	If $M<\infty$ and $c$ is coercive, then the optimization problem (\ref{KS}) admits a minimizer. In particular, there exists an optimal coupling $\pi^* \in \Pi(\mu_0,\mu_1^*,\cdots,\mu_{K-1}^*,\mu_K)$ along with optimal intermediate measures $\mu_1^*,\cdots,\mu_{K-1}^*$ such that $\{\pi^*,\,\{\mu_k^*\}_{k=1}^{K-1}\}$ attains the infimum in (\ref{KS}).
    \end{theorem}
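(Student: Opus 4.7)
The plan is to run the direct method of the calculus of variations, harnessing the three preparatory lemmas already established. First, I would select a minimizing sequence $\{\pi_n\} \subset \mathcal{P}(\Omega)$ with $\pi_n \in \Pi(\mu_0,\mu_1^n,\cdots,\mu_{K-1}^n,\mu_K)$ where $\mu_k^n = {T_k}_\# \pi_n$, chosen so that $\int_\Omega c\,{\rm d}\pi_n \to M$. The finiteness $M<\infty$ guarantees existence of such a sequence.

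Next, since $c$ is coercive, Lemma 2.3 immediately yields uniform tightness of $\{\pi_n\}$. By Prokhorov's theorem there is a subsequence (not relabeled) and a limit $\pi^* \in \mathcal{P}(\Omega)$ with $\pi_n \rightharpoonup \pi^*$. I would then promote this weak limit to an admissible coupling: each projection $T_k : \Omega \to \widehat{X}_k$ is continuous, so the pushforward $\pi \mapsto {T_k}_\# \pi$ is continuous in the weak topology; hence ${T_0}_\# \pi^* = \mu_0$ and ${T_K}_\# \pi^* = \mu_K$ are preserved, while defining $\mu_k^* := {T_k}_\# \pi^*$ for $k=1,\cdots,K-1$ produces weak limits $\mu_k^n \rightharpoonup \mu_k^* \in \mathcal{P}(\widehat{X}_k)$ and places $\pi^* \in \Pi(\mu_0,\mu_1^*,\cdots,\mu_{K-1}^*,\mu_K)$.

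Finally, I would close the argument with the Portmanteau-type inequality for nonnegative lower semicontinuous integrands under weak convergence. By Lemma 2.1 the cost $c$ is lower semicontinuous and nonnegative, so
\[
\int_\Omega c\,{\rm d}\pi^* \leq \varliminf_{n\to\infty}\int_\Omega c\,{\rm d}\pi_n = M.
\]
Since $(\pi^*,\{\mu_k^*\}_{k=1}^{K-1})$ is admissible, the left side is also bounded below by $M$, so equality holds and $\{\pi^*,\{\mu_k^*\}_{k=1}^{K-1}\}$ attains the infimum.

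The principal subtlety, and the main reason coercivity was introduced, is that the intermediate marginals $\mu_k^n$ are not fixed a priori but vary along the sequence; without extra control they could leak mass to infinity, obstructing compactness. In classical MOT, Lemma 2.2 alone delivers tightness of the coupling family because the marginals are prescribed. Here that argument is unavailable for $1 \leq k \leq K-1$, and the coercivity condition is precisely what substitutes for it, via the Markov-type estimate in Lemma 2.3. Once tightness and lower semicontinuity are in hand, verifying that the a posteriori marginals $\mu_k^*$ are legitimate competitors in the infimum of (2.1) is automatic, since the outer infimum in (2.1) ranges over all admissible tuples $(\mu_1,\cdots,\mu_{K-1},\pi)$.
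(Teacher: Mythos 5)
Your proposal is correct and follows essentially the same route as the paper's proof: a minimizing sequence, tightness via Lemma 2.3 and Prokhorov's theorem, preservation of marginals under weak convergence via continuity of the projections, and the lower semicontinuity of $c$ to close the liminf inequality. No gaps.
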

    \begin{proof}
    	Since $c$ is coercive, then there exists a minimizing sequence $\{\pi_n\} \subset  \Pi(\mu_0,\mu_1^n,\cdots,\mu_{K-1}^n,\mu_K)$ of (\ref{KS}) with $\mu_k^n={T_k}_\# \pi_n$ for $k=1,\cdots,K-1$ is uniformly tight. By definition of $M$, we have
    	\[
    	\lim_{n\to\infty} \int_{\Omega} c \ {\rm d}\pi_n = M.
    	\]
    	Since $\{\pi_n\} \subset \mathcal{P}(\Omega)$ is tight by Lemma~\ref{newtight}, Prokhorov’s theorem guarantees a weakly convergent subsequence  $\pi_{n_j} \rightharpoonup \pi^* \in \mathcal{P}(\Omega)$ as $j \to \infty$.  In particular, for every $f \in C_b(\Omega)$,
    	\[
    	\lim_{j\to\infty} \int_{\Omega} f\ {\rm d}\pi_{n_j} = \int_{\Omega} f \ {\rm d}\pi^*.
    	\]
    	For every $ g \in C_b(\widehat{X}_k)$, $g\circ T_k \in C_b(\Omega)$ and
    	\[
    		\lim_{j\to\infty}\int_{\widehat{X}_k} g \ {\rm d}({T_k}_\# \pi_{n_j})=\lim_{j\to\infty}\int_{\Omega} g \circ T_k \ {\rm d}\pi_{n_j}=\int_{\Omega} g \circ T_k \ {\rm d}\pi^*=\int_{\widehat{X}_k} g \ {\rm d}({T_k}_\# \pi^*).
    	\]
    	Applying this to $f = g\circ T_k$ for an arbitrary $g \in C_b(\widehat{X}_k)$ yields ${T_k}_\# \pi_{n_j} \rightharpoonup {T_k}_\# \pi^*$ as $j \to \infty$ , i.e. $\mu^{n_j}_k := {T_k}_\#\pi_{n_j}$ converges weakly to $\mu^*_k := {T_k}_\#\pi^*$ for each $k=1,\cdots,K-1$. In particular, $\pi^* \in \Pi(\mu_0,\mu^*_1,\cdots,\mu^*_{K-1},\mu_K)$ since the endpoint marginals are preserved. Because $c$ is lower semicontinuous, we have
    	\[
    		M \leq\int_{\Omega} c\ {\rm d}\pi^* \leq \varliminf_{j\to \infty} \int_{\Omega} c\ {\rm d}\pi_{n_j}=M.
    	\]
    	Therefore, $\{ \pi^*,\{\mu^*_k \}_{k=1}^{K-1} \}$ is a minimizer of (\ref{KS}).
    \end{proof}
    \begin{remark}[Robustness of proof]
    The proof of Theorem~\ref{KP} depends only on the Polish structure of the spaces, the lower semicontinuity of $c$, and Definition~\ref{Coercive}. It is independent of how jumps are modeled, so long as the resulting spaces remain Polish. This ensures structural stability, provides a unified existence result for Sections~\ref{sec4} without repeating arguments, and highlights the framework's modeling flexibility.
    \end{remark}
    \begin{remark}[On the coercivity condition and its verification]
    The coercivity condition (Definition~\ref{Coercive}) is essential for the existence proof in Theorem~\ref{KP}, though its verification depends on the problem structure. When HJMOT reduces to classical MOT or OT, coercivity becomes unnecessary as Lemma~\ref{oldtight} ensures uniform tightness of minimizing sequences. In compact Polish spaces, coercivity holds trivially. For non-compact but proper Polish spaces, coercivity is typically straightforward to verify, as one can readily construct suitable $\Phi$ functions that ensure both compact sublevel sets and bounded integrals along minimizing sequences. In contrast, for non-compact and non-proper Polish spaces, coercivity requires more careful analysis and construction of $\Phi$ tailored to the specific geometry of the spaces and the behavior of the cost function. Overall, coercivity represents a verifiable and often mild assumption in practical applications of the HJMOT framework.
    \end{remark}
    \section{Existence and Uniqueness for the Monge Problem}
    \label{sec3}
    \begin{dy}[Sequential differentiability]
    	\label{sd}
    		Let $(X_0,{\sf d}_0)$ be a Polish space. We say that the cost function $c: \Omega \to [0, +\infty]$ is \textit{sequentially differentiable} at $\widehat{x}_0 \in X_0$ if for any fixed path $\omega=(\widehat{x}_0,\cdots,\widehat{x}_K) \in \Omega$ and there exists sequence $\{\widehat{x}_0^n\}_{n=1}^\infty \subset X_0$ converging to $\widehat{x}_0$, the limit
    			\[
    			D_{\{\widehat{x}_0^n\},\,\widehat{x}_0} c(\omega) := \lim_{n \to \infty} \frac{c(\widehat{x}_0^n,\widehat{x}_1,\cdots,\widehat{x}_K) - c(\widehat{x}_0,\widehat{x}_1,\cdots\,\widehat{x}_K)}{{\sf d}_0(\widehat{x}_0^n,\widehat{x}_0)}
    			\]
    			always exists in $\mathbb{R}$. This limit depends on the choice of the sequence $\{\widehat{x}_0^n\}$.
    \end{dy}
    \begin{dy}[Cyclical monotonicity]
    	\label{cm}
    	A set $S \subset \Omega$ is called $c$-\textit{cyclically monotone} if for every finite collection of points $\{(\widehat{x}_0^i,\cdots,\widehat{x}_K^i)\}_{i=1}^m \subset S$ and for every choice of permutations $\sigma_0,\cdots,\sigma_K$ of the index set $\{1,\cdots,m\}$, one has 
    	\[
    	\sum_{i=1}^m c(\widehat{x}_0^i,\cdots,\widehat{x}_K^i) 
    	\;\le\; \sum_{i=1}^m c\Big(\widehat{x}_0^{\sigma_0(i)},\,\widehat{x}_1^{\sigma_1(i)},\,\cdots,\,\widehat{x}_K^{\sigma_K(i)}\Big)\,. 
    	\] 
    \end{dy}
    \begin{dy}[Sequential twistedness]
    	\label{st}
    	A cost function $c$ is called \textit{sequential twisted} 
    	if $c(\omega)$ is sequential differentiable at $\widehat{x}_0$ and the mapping
    	\[
    	(\widehat{x}_1,\cdots,\widehat{x}_K) \;\mapsto\; D_{\{\widehat{x}_0^n\} ,\,\widehat{x}_0} c(\widehat{x}_0,\widehat{x}_1,\cdots,\widehat{x}_K) 
    	\] 
    	is injective on the subset $S_{\widehat{x}_0}:=\{ \omega=(\widehat{x}_0,\widehat{x}_1,\cdots,\widehat{x}_K) \in S \mid \widehat{x}_i\in\widehat{X}_i \text{ for }1\leq i \leq K \}$, where $S$ is a $c$-cyclically monotone set.\\
    	In particular, if Polish spaces $X_0$ that lack a nontrivial curve structure (such as finite or countable discrete spaces, singletons, or totally disconnected perfect sets), the cost function $c$ is called \textit{discrete-type sequential twisted} if for $\mu_0$-a.e.  $\widehat{x}_0 \in X_0$, the set of optimal continuations
    	\[
    	\mathcal{F}(\widehat{x}_0)  := \arg\min_{(\widehat{x}_1,\cdots,\,\widehat{x}_K) \in \widehat{X}_1\times\cdots\times\widehat{X}_K}
    	c(\widehat{x}_0,\widehat{x}_1,\cdots,\widehat{x}_K)
    	\]
    	is a singleton.
    \end{dy}
     \begin{dy}[Strongly  coercivity]
     \label{sc}
     	Let $\{\pi^*,\{\mu_k^*\}_{k=1}^{K-1}\}$ be a solution of (\ref{KS}). We say that cost function $c$ is \textit{strongly coercive} if there exists a measurable function $\Phi: \Omega \to [0, +\infty]$  such that 
    	\begin{itemize}
    		\item[$(1)$] For every $R > 0$, the sublevel set $\{\omega \in \Omega : \Phi(\omega) \leq R\}$ is compact;
    		\item[$(2)$] There exists a minimizing sequence $\{\pi_n\}$ of (\ref{KS}) such that $\displaystyle\sup_n \int_\Omega \Phi \, d\pi_n < \infty.$
    		\item[$(3)$] There exists $\varepsilon>0$ such that for any $\mu_0$-a.e. $\widehat{x}_0 \in X_0$, $\{\omega \in {\rm supp}(\pi^*) \mid T_0(\omega)=\widehat{x}_0,c(\omega)\leq h(\widehat{x}_0)+\varepsilon\}$ is a compact set where $h(\widehat{x}_0):=\inf\{c(\omega)\mid \omega \in {\rm supp}(\pi^*), T_0(\omega)=\widehat{x}_0\}$.
    	\end{itemize}
    \end{dy}
    \begin{lemma}[Kantorovich duality \cite{MR4809472}]
    	\label{Kd}
    	Let $\{\pi^*,\{\mu_k^*\}_{k=1}^{K-1}\}$be an optimal solution to the optimization problem (\ref{KS}), then there exist Borel functions $\{v_k\}_{k=0}^K$ (called $c$-splitting functions)  such that 
    	\[
    	\sum_{k=0}^{K} v_k(\widehat{x}_k) \leq c(\omega)\quad\text{for all}\quad\omega(\widehat{x}_0,\cdots,\widehat{x}_K) \in \Omega
    	\]
    	and
    	\[
    	\sum_{k=0}^{K} v_k(\widehat{x}_k) =c(\omega)\quad\text{for}\quad\omega \in{\rm supp}(\pi^*).
    	\]
    \end{lemma}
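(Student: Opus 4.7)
The plan is to reduce the HJMOT duality to the classical multi-marginal Kantorovich duality established in \cite{MR4809472}. The key initial observation is that once the optimal intermediate marginals $\mu_1^*,\ldots,\mu_{K-1}^*$ are fixed, the coupling $\pi^*$ must itself be optimal for the classical multi-marginal problem with all marginals prescribed:
\[
\pi^* \in \arg\min_{\pi \in \Pi(\mu_0,\mu_1^*,\ldots,\mu_{K-1}^*,\mu_K)} \int_\Omega c \, d\pi.
\]
Indeed, any strictly better coupling $\pi'$ with the same marginals would contradict the overall optimality of $\{\pi^*,\{\mu_k^*\}\}$ for problem (2.1), since $\{\pi',\{\mu_k^*\}\}$ would then be admissible with smaller cost.

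With the problem thus reduced, I would invoke the multi-marginal Kantorovich duality for Polish spaces with lower semicontinuous, nonnegative costs. The hypotheses are in place: each $\widehat{X}_k$ is Polish by construction, the cost $c$ is lower semicontinuous by Lemma~2.1, and $M<\infty$ by assumption. The duality asserts
\[
M \;=\; \sup\left\{\sum_{k=0}^{K}\int v_k \, d\mu_k^* \;\middle|\; v_k \in L^1(\mu_k^*),\ \sum_{k=0}^{K} v_k(\widehat{x}_k) \le c(\omega)\ \forall \omega \right\},
\]
and produces Borel maximizers $v_k:\widehat{X}_k \to \mathbb{R}\cup\{-\infty\}$, which are the desired $c$-splitting functions.

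The equality on the support follows by a standard complementary-slackness argument. Integrating the pointwise inequality against $\pi^*$ and using the duality identity yields
\[
\int_\Omega \left[c(\omega) - \sum_{k=0}^{K} v_k(\widehat{x}_k)\right] d\pi^*(\omega) = 0,
\]
so the nonnegative integrand vanishes $\pi^*$-a.e. To upgrade this from almost-everywhere to all of $\mathrm{supp}(\pi^*)$, I would exploit that each $v_k$ can be chosen as an iterated $c$-conjugate and hence upper semicontinuous, while $c$ is lower semicontinuous; the equality set is therefore closed, and closedness plus $\pi^*$-full measure forces it to contain the topological support.

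The main obstacle, and the reason a direct citation to \cite{MR4809472} is appropriate rather than a reproof, is the $c$-conjugate construction in the hierarchical-jump setting: the cost $c(\omega)$ depends on $\omega$ through the active-index map $I(\omega)$, which switches between different pairwise-sum structures whenever a coordinate passes through the isolated point $\partial_k$. One must check that the iterated $c$-transforms remain Borel measurable on each $\widehat{X}_k$ and are $\mu_k^*$-integrable. This is tractable because $\{\partial_k\}$ is clopen in $\widehat{X}_k$, so measurability on $\widehat{X}_k$ decomposes into measurability on $X_k$ plus the single value $v_k(\partial_k)$; once this bookkeeping is handled, the general Polish-space duality of \cite{MR4809472} applies verbatim.
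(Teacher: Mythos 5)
Your proposal follows essentially the same route as the paper: fix the optimal intermediate marginals so that $\pi^*$ becomes an optimal coupling for the classical multi-marginal problem on $\Pi(\mu_0,\mu_1^*,\dots,\mu_{K-1}^*,\mu_K)$, invoke a known multi-marginal Kantorovich duality on Polish spaces (the paper uses Kellerer's theorem, together with Lemmas 2.1 and 2.2 for lower semicontinuity of $c$ and tightness of the coupling set), and then obtain the $\pi^*$-a.e.\ equality by complementary slackness. The only genuine divergence is the final upgrade from ``$\pi^*$-a.e.'' to ``everywhere on $\mathrm{supp}(\pi^*)$.'' You propose taking the $v_k$ to be iterated $c$-conjugates, hence upper semicontinuous, so that the equality set $\{c-\sum_k v_k\le 0\}$ is closed and must contain the support. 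Be aware that this step is not free here: a $c$-conjugate is an infimum of the functions $c(\cdot,\dots)-(\cdots)$, and an infimum of \emph{lower} semicontinuous functions need not be upper semicontinuous; since the paper only assumes $c$ lower semicontinuous (and the jump structure makes $c$ genuinely discontinuous across $\partial_k$), you would need an additional argument or hypothesis to justify upper semicontinuity of the conjugates. The paper instead argues by contradiction, supposing the strict inequality holds on an entire open neighbourhood of a support point and deriving $\pi^*(U)>0$; that argument has its own weakness, since failure of equality at a single point of the support does not by itself yield such a neighbourhood. So your route is the more standard fix, but as written both final steps leave a semicontinuity point to be checked; your reduction and the measurability bookkeeping for the clopen points $\partial_k$ are otherwise sound.
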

    \begin{proof}
    	Let $\{\pi^*,\{\mu_k^*\}_{k=0}^K\}$ be an optimal solution to the optimization problem (\ref{KS}) where we set $\mu_0^*:=\mu_0$ and $\mu_K^*:=\mu_K$. Set 
    	\[
    	M=\inf_{\pi \in \Pi(\mu_0^*,\cdots,\mu_K^*)} \int_\Omega c(\omega) \ \pi({\rm d}\omega)
    	\]
    	and
    	\[
    	D:=\sup\left\{\left. \sum_{k=0}^{K}\int_{\widehat{X}_k} v_k \ {\rm d}\mu_k^*\ \right|\ v_k \in L^1(\widehat{X}_k;\mu_k^*),\,\sum_{k=0}^{K}v_k(x_k) \leq c(\omega)\text{ for all }\omega \in \Omega    \right\}.
    	\]
    	Since $c$ is non-negative semicountinuous and $\Pi(\mu_0^*,\cdots,\mu_K^*)$ is uniformly tight (Lemma~\ref{oldtight}), then by Kellerer's theorem (\cite{MR761565}) we have $D=M$ and there exists $v_k \in L^1(\widehat{X}_k,\mu_k^*)$ such that for evey $\omega \in \Omega$,
    	\[
    	\sum_{k=0}^{K} v_k (\widehat{x}_k)\leq c(\omega)\quad\text{and}\quad \sum_{k=0}^{K} \int_{\widehat{X}_k} v_k\ {\rm d}\mu_k^* =M.
    	\]
    	Since 
    		\[
    	\int_\Omega \left( c - \sum_{k=0}^{K} v_k \right)\ {\rm d}\pi^*=\int_\Omega c \ {\rm d}\pi^* - \sum_{k=0}^{K}\int_{\widehat{X}_k} v_k\ {\rm d}\mu_k^*=M-M=0.
    	\] 
    	Therefore $\displaystyle \sum_{k=0}^{K} v_k(\widehat{x}_k) =c(\omega)\text{ for }\pi^*\text{-a.e. } \omega \in \Omega$. Suppose there exists $\omega \in \text{supp}(\pi^*)$ and an open neighbourhood $ U$ of $ \omega$ such that for any  $\omega' \in U$, we have $\displaystyle \sum_{k=0}^{K} v_k(\widehat{x}'_k) <c(\omega')$. Since $\omega \in \text{supp}(\pi^*)$, we can see that $\pi^*(U)>0$. Then
    	\[
    	\int_\Omega \left( c - \sum_{k=0}^{K} v_k \right)\ {\rm d}\pi^* \geq \int_U \left( c - \sum_{k=0}^{K} v_k \right)\ {\rm d}\pi^* >0,
    	\]
    	this contradicts the statement that$\displaystyle \sum_{k=0}^{K} v_k(\widehat{x}_k) =c(\omega)\text{ for }\pi^*\text{-a.e. } \omega \in \Omega$. Therefore, $\displaystyle \sum_{k=0}^{K} v_k(\widehat{x}_k) =c(\omega)$ for $\omega \in {\rm supp}(\pi^*)$.
    \end{proof}
    \begin{lemma}[cf. \cite{MR3438523}]
    	\label{lcm}
    	Let $\{\pi^*,\{\mu_k^*\}_{k=1}^{K-1}\}$ be a solution of (\ref{KS}). Then ${\rm supp}(\pi^*)$ is a $c$-cyclically monotone set.
    \end{lemma}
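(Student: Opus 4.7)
The plan is to deduce the statement directly from the Kantorovich duality established in Lemma 3.1, bypassing any measure-rearrangement argument. The $c$-splitting functions $v_0,\ldots,v_K$ produced there already encode cyclical monotonicity in a pointwise fashion: on ${\rm supp}(\pi^*)$ they reproduce $c$ exactly, while on the whole of $\Omega$ they bound $c$ from below. The algebraic fact that permuting the $i$-indices inside a double sum does not change its value should then close the argument in one step.

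Concretely, I would start by fixing an arbitrary finite collection $\{(\widehat{x}_0^i,\ldots,\widehat{x}_K^i)\}_{i=1}^m \subset {\rm supp}(\pi^*)$ together with any permutations $\sigma_0,\ldots,\sigma_K$ of $\{1,\ldots,m\}$, and then chaining the two properties from Lemma 3.1 with a Fubini-type interchange of summation. Using the equality $\sum_{k=0}^K v_k(\widehat{x}_k) = c(\omega)$ for $\omega \in {\rm supp}(\pi^*)$ and swapping the order of summation,
\[
\sum_{i=1}^m c(\widehat{x}_0^i,\ldots,\widehat{x}_K^i)
\;=\; \sum_{k=0}^K \sum_{i=1}^m v_k(\widehat{x}_k^i)
\;=\; \sum_{k=0}^K \sum_{i=1}^m v_k\!\bigl(\widehat{x}_k^{\sigma_k(i)}\bigr),
\]
where the last identity just reindexes each inner sum via the permutation $\sigma_k$. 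Swapping the order of summation back and applying the global inequality $\sum_{k=0}^K v_k(\widehat{x}_k) \le c(\omega)$ to each rearranged tuple $(\widehat{x}_0^{\sigma_0(i)},\ldots,\widehat{x}_K^{\sigma_K(i)}) \in \Omega$ then yields $\sum_i c(\widehat{x}_0^i,\ldots,\widehat{x}_K^i) \le \sum_i c(\widehat{x}_0^{\sigma_0(i)},\ldots,\widehat{x}_K^{\sigma_K(i)})$, which is precisely the inequality demanded by Definition 3.2.

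The only nontrivial input is the pointwise (and not merely $\pi^*$-almost everywhere) equality $\sum_k v_k = c$ on ${\rm supp}(\pi^*)$; upgrading the duality equality from an a.e. statement to an everywhere-on-support statement is usually the delicate point in this kind of result, but Lemma 3.1 has already done the work. If one preferred to avoid invoking Lemma 3.1, the classical alternative would be a measure-rearrangement argument: assume by contradiction a cyclical-monotonicity defect of size $\delta>0$, choose small disjoint product neighborhoods $V^i = U_0^i \times \cdots \times U_K^i$ with $\eta := \min_i \pi^*(V^i) > 0$, and test the optimality of $\pi^*$ against the competitor
\[
\widetilde{\pi} \;:=\; \pi^* \;-\; \frac{\eta}{m}\sum_{i=1}^m \lambda^i \;+\; \frac{\eta}{m}\sum_{i=1}^m \bigotimes_{k=0}^K \lambda_k^{\sigma_k(i)},
\]
with $\lambda^i := \pi^*|_{V^i}/\pi^*(V^i)$ and $\lambda_k^i := (T_k)_\# \lambda^i$. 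Marginal preservation follows immediately because each $\sigma_k$ is a permutation, but because $c$ is only lower semicontinuous one must carefully produce an upper bound on $\int c \,{\rm d}\bigl(\bigotimes_k \lambda_k^{\sigma_k(i)}\bigr)$, and this control is the technical hurdle that the duality route sidesteps.
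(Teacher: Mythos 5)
Your proposal is correct and takes essentially the same route as the paper: both deduce cyclical monotonicity from the $c$-splitting functions of Lemma 3.1, using the equality $\sum_k v_k = c$ on ${\rm supp}(\pi^*)$, the global inequality $\sum_k v_k \le c$, and the invariance of the sums under the permutations $\sigma_k$. The paper merely dresses your finite-sum reindexing in the language of discrete measures $\beta=\frac{1}{m}\sum_i\delta_{\omega_i}$ and permuted competitors $\alpha^\sigma$ with identical marginals, so the two arguments are the same computation.
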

    \begin{proof}
    	By Lemma~\ref{Kd}, there exist Borel functions $\{v_k\}_{k=0}^K$ satisfying:
    	\[
    	\begin{aligned}
    		\sum_{k=0}^K v_k(\widehat{x}_k) &\leq c(\omega) \quad \forall \omega = (\widehat{x}_0, \cdots, \widehat{x}_K) \in \Omega, \\
    		\sum_{k=0}^K v_k(\widehat{x}_k) &= c(\omega) \quad \forall \omega \in {\rm supp}(\pi^*).
    	\end{aligned}
    	\]
    	Let $S = \{\omega_1, \cdots, \omega_m\} \subset {\rm supp}(\pi^*)$ be an arbitrary finite collection of points, where $\omega_i = (\widehat{x}_0^i, \cdots, \widehat{x}_K^i)$ for $ i =1,\cdots, m$. Define a discrete probability measure $\beta$ supported on $S$ as
    	\[
    	\beta = \frac{1}{m} \sum_{i=1}^m \delta_{\omega_i}.
    	\]
    	where $\delta_{\omega}$ is a Dirac measure. Let $\alpha$ be any probability measure on $\Omega$ with identical marginals to $\beta$, i.e., 
    	\[
    		(T_k)_\# \alpha = (T_k)_\# \beta \quad\text{for every}\quad k=0,\cdots,K. 
    	\]
    	Consider the cost functional evaluated at $\alpha$,
    	\[
    	\begin{aligned}
    		\int_\Omega c\ {\rm d}\beta &=\frac{1}{m}\sum_{i=0}^{K}\sum_{k=0}^{K}v_k(T_k(\omega_i))\\
    		&=\frac{1}{m}\sum_{i=1}^{m}\sum_{k=0}^{K}\int_{_{\widehat{X}_k}} v_k\ {\rm d}({T_k}_\# \delta_{\omega_i})\\
    		&=\sum_{k=0}^{K}\int_{\widehat{X}_k} v_k\ {\rm d}({T_k}_\#\beta)\\
    		&=\sum_{k=0}^{K}\int_{\widehat{X}_k} v_k\ {\rm d}({T_k}_\#\alpha)=\int_\Omega \sum_{k=0}^{K} v_k(T_k(\omega)) \ {\rm d}\alpha(\omega)\leq\int_\Omega c\ {\rm d}\alpha.
    	\end{aligned}
    	\]
       	Thus we have established $\int_\Omega c\   {\rm d}\beta \leq \int_\Omega c\   {\rm d}\alpha$ for any measure $\alpha$ with the same marginals as $\beta$.\\
    	To verify $c$-cyclical monotonicity, consider any permutation $\sigma_0,\cdots,\sigma_K$  and define the permuted measure
    	\[
    	\alpha^\sigma:=\alpha^{\sigma_0,\cdots,\sigma_K}= \frac{1}{m} \sum_{i=1}^m \delta_{(\widehat{x}_0^{\sigma_0(i)}, \cdots, \widehat{x}_K^{\sigma_K(i)})}.
    	\]
    	Similarly, $\alpha^\sigma$ has identical marginals to $\beta$. Applying the previous inequality yields
    	\[
    	\int_\Omega c \  {\rm d}\beta \leq \int_\Omega c \  {\rm d}\alpha^\sigma = \frac{1}{m} \sum_{i=1}^m c(\widehat{x}_0^{\sigma_0(i)}, \cdots, \widehat{x}_K^{\sigma_K(i)}).
    	\]
    	Multiplying by $m$ gives us
    	\[
    	\sum_{i=1}^m c(\omega_i) \leq \sum_{i=1}^m c(\widehat{x}_0^{\sigma_0(i)}, \cdots, \widehat{x}_K^{\sigma_K(i)}),
    	\]
    	which holds for any permutation $\sigma_0,\cdots,\sigma_K$. Therefore, $S$ is a $c$-cyclically monotone set. Since $S \subset {\rm supp}(\pi^*)$ was an arbitrary finite subset, we conclude that ${\rm supp}(\pi^*)$ is a $c$-cyclically monotone set.
    \end{proof}
    \begin{lemma}
    	\label{none}
    	Let $\{\pi^*,\{\mu_k^*\}_{k=1}^{K-1}\}$ be a solution of (\ref{KS}), set $h(\widehat{x}_0):=\inf\{c(\omega) \mid \omega\in {\rm supp}(\pi^*),\ T_0(\omega)=\widehat{x}_0\}$. If the cost function $c$ is strongly coercive, then $S(\widehat{x}_0):=\{\omega\in {\rm supp}(\pi^*) \mid T_0(\omega)=\widehat{x}_0,\ c(\omega)=h(x_0)\}$ is  a non-empty compact set.
    \end{lemma}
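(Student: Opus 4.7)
The plan is to combine the equivalent characterization of strong coercivity stated just after Definition 3.4---that every finite sublevel set $\{c\le R\}$ is compact in $\Omega$---with the lower semicontinuity of $c$ from Lemma 2.1 and the continuity of the projection $T_0$. The argument splits naturally into nonemptiness, via a minimizing-sequence compactness argument, and compactness, via expressing $S(\widehat{x}_0)$ as the intersection of a closed fiber with a compact sublevel set.

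For nonemptiness, the claim is substantive precisely in the case $h(\widehat{x}_0)<\infty$, which is the scenario relevant in all subsequent applications of this lemma. Granting this, I would choose a minimizing sequence $\{\omega_n\}\subset T_0^{-1}(\widehat{x}_0)$ with $c(\omega_n)\downarrow h(\widehat{x}_0)$, so that eventually $c(\omega_n)\le h(\widehat{x}_0)+1$. Strong coercivity then confines the tail of $\{\omega_n\}$ to the compact set $\{c\le h(\widehat{x}_0)+1\}$, and I extract a convergent subsequence $\omega_{n_j}\to\omega^*$. Continuity of the projection gives $T_0(\omega^*)=\widehat{x}_0$, while Lemma 2.1 delivers $c(\omega^*)\le\liminf_j c(\omega_{n_j})=h(\widehat{x}_0)$. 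The reverse inequality is immediate from the definition of the infimum, so $\omega^*\in S(\widehat{x}_0)$.

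For compactness, I would use that on the fiber $T_0^{-1}(\{\widehat{x}_0\})$ the condition $c(\omega)=h(\widehat{x}_0)$ is equivalent to $c(\omega)\le h(\widehat{x}_0)$, since $c\ge h(\widehat{x}_0)$ holds on the fiber by definition of $h$. Hence $S(\widehat{x}_0)=T_0^{-1}(\{\widehat{x}_0\})\cap\{\omega\in\Omega:c(\omega)\le h(\widehat{x}_0)\}$, which is the intersection of a closed set (the preimage of a point under the continuous map $T_0$ into the Hausdorff space $\widehat{X}_0$) with a compact set (strong coercivity applied at the radius $R=h(\widehat{x}_0)$). Being a closed subset of a compact set, $S(\widehat{x}_0)$ is itself compact.

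The only real subtlety, essentially a bookkeeping point, is the degenerate case $h(\widehat{x}_0)=+\infty$: read literally, the statement would collapse $S(\widehat{x}_0)$ to the full fiber $T_0^{-1}(\{\widehat{x}_0\})$, which is not compact in general; so the lemma is implicitly applied only at base points admitting at least one finite-cost path. Beyond that caveat the proof is quite short and requires neither the $c$-cyclical monotonicity of Lemma 3.2 nor the Kantorovich duality of Lemma 3.1, only Lemma 2.1 and the sublevel-set form of strong coercivity.
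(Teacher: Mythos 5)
Your proposal is correct and follows essentially the same route as the paper: both establish nonemptiness by confining a minimizing sequence on the fiber to the compact sublevel set $\{c\le h(\widehat{x}_0)+1\}$, extracting a convergent subsequence, and using continuity of $T_0$ together with lower semicontinuity of $c$. Your compactness step (writing $S(\widehat{x}_0)$ as the intersection of the closed fiber with a compact sublevel set) is a marginally cleaner packaging of the paper's sequential closedness argument, and your remark about the implicit assumption $h(\widehat{x}_0)<\infty$ is a fair observation that the paper leaves tacit.
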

   \begin{proof}
   For fix $\mu_0$-a.e. $\widehat{x}_0 \in X_0$, by defined of $h(\widehat{x}_0)$, there exists $\{\omega_n\}_{n=1}^\infty \subset {\rm supp}(\pi^*)$ such that 
   \[
   T_0(\omega_n)=\widehat{x}_0 \quad\text{and}\quad \lim_{n\to \infty}c(\omega_n)=h(\widehat{x}_0).
   \]
   Since $c$ is strongly coercive, then there exsts $\varepsilon>0$ such that $K:=\{\omega \in {\rm supp}(\pi^*) \mid T_0(\omega)=\widehat{x}_0, c(\omega) \leq h(\widehat{x}_0) +\varepsilon\}$ is a compact set.
   Then,  there exists $N \in \mathbb{N}$ such that for any $n>N$, 
   \[
   \left| c(\omega_n) -h(\widehat{x}_0) \right| <\frac{\varepsilon}{3}.
    \]
    Since $\{\omega_n\}_{n\geq N}^\infty \subset K$ and $K$ is compact, there exists $\omega_{n_k}$ such that $\omega_{n_k} \to \omega^*\in K$ as $k \to \infty$.  For each $k$, $T_0(\omega_{n_k})=\widehat{x}_0$ and $T_0$ is continuous map, then 
     \[
     T_0(\omega^*)= \lim_{n \to \infty}T_0(\omega_{n_k}) =\widehat{x}_0.
     \]
     Since $c$ is lower semicontinuous, 
     \[
     c(\omega^*) \leq \liminf_{k\to\infty} c(\omega_{n_k}) = h(\widehat{x}_0).
     \]
     By defined of $h(\widehat{x}_0)$,  $h(\widehat{x}_0) \leq c(\omega^*)$, then $c(\omega^*)=h(\widehat{x}_0)$. Hence $c(\omega^*) \in S(\widehat{x}_0)$ and $S(\widehat{x}_0)$ is non-empty. Let $\{\omega_i\}_{i=1}^\infty \subset S(\widehat{x}_0)$ be a convergence  sequence and $\omega_i \to \widehat{\omega} \in {\rm supp}(\pi^*)$ as $i \to \infty$. Since $  c(\widehat{\omega}) \leq \liminf_{i\to\infty} c(\omega_{i}) = h(\widehat{x}_0)$ and $c(\widehat{\omega})\geq h(\widehat{x}_0)$, then $c(\widehat{\omega})=h(\widehat{x}_0)$. Therefore, $\widehat{\omega} \in S(\widehat{x}_0)$ and $S(\widehat{x}_0)$ is a closed set. Since $S(\widehat{x}_0) \subset K$ and $K$ is compact set, then $S(\widehat{x}_0)$ is compact set.
   \end{proof}
   \begin{lemma}
   	\label{hlsc}
   	If $c$ is locally Lipschitz continuous with respect to the initial variable  $\widehat{x}_0$, then $h(\widehat{x}_0)$ $c$ is also locally Lipschitz continuous with respect to the initial variable  $\widehat{x}_0$.
   \end{lemma}
   \begin{proof}
   	Since $c$ is locally Lipschitz continuous with respect to the initial variable  $\widehat{x}_0$, for any $\widehat{x}_0 \in \Omega$, there exists a Lipschitz constant $L>0$ and a neighborhood $U(\widehat{x}_0)$  of $\widehat{x}_0$  such that  for all $\widehat{y}_0 \in U(\widehat{x}_0)$ and all $(\widehat{x}_1,\cdots,\widehat{x}_K) \in \widehat{X}_1 \times \cdots\times \widehat{X}_K$,
   	\[
   	\left| c(\widehat{x}_0,\widehat{x}_1,\cdots,\widehat{x}_K) -c(\widehat{y}_0,\widehat{x}_1,\cdots,\widehat{x}_K)   \right| \leq L\cdot{\sf d}_0(\widehat{x}_0,\widehat{y}_0).
   	\]
   	For any $\varepsilon_1 >0$, there exists  a path  $\omega^y = (\widehat{y}_0,\widehat{y}_1,\cdots,\widehat{y}_K) \in {\rm supp}(\pi^*)$ such that 
   	\[
   	c(\omega^y)\leq h(\widehat{y}_0)+\varepsilon_1.
   	\]
   	Applying the Lipschitz condition, we have
   	\[
   	c(\widehat{x}_0, \widehat{y}_1,\cdots,\widehat{y}_K) \leq c(\omega^y) +L\cdot {\sf d}_0(\widehat{x}_0,\widehat{y}_0).
   	\]
   	Then
   	\[
   	\begin{aligned}
   		h(\widehat{x}_0) &\leq c(\widehat{x}_0, \widehat{y}_1,\cdots,\widehat{y}_K)\\
   		&\leq c(\omega^y) +L\cdot {\sf d}_0(\widehat{x}_0,\widehat{y}_0)\\
   		&\leq h(\widehat{y}_0)+\varepsilon_1 +L\cdot {\sf d}_0(\widehat{x}_0,\widehat{y}_0),
   	\end{aligned}
   	\]
   	and hence $h(\widehat{x}_0) - h(\widehat{y}_0) \leq \varepsilon_1 +L\cdot {\sf d}_0(\widehat{x}_0,\widehat{y}_0)$. For any $\varepsilon_2>0$, there exists  a path  $\omega^x = (\widehat{x}_0,\widehat{x}'_1,\cdots,\widehat{x}'_K) \in {\rm supp}(\pi^*)$ such that 
   	\[
   	c(\omega^x)\leq h(\widehat{x}_0)+\varepsilon_2.
   	\]
   	Applying the Lipschitz condition, we have
   	\[
   	c(\widehat{y}_0, \widehat{x}'_1,\cdots,\widehat{x}'_K) \leq c(\omega^x) +L\cdot {\sf d}_0(\widehat{x}_0,\widehat{y}_0).
   	\]
   	Then
   	\[
   	\begin{aligned}
   		h(\widehat{y}_0) &\leq c(\widehat{y}_0, \widehat{x}'_1,\cdots,\widehat{x}'_K)\\
   		&\leq c(\omega^x) +L\cdot {\sf d}_0(\widehat{x}_0,\widehat{y}_0)\\
   		&\leq h(\widehat{x}_0)+\varepsilon_2 +L\cdot {\sf d}_0(\widehat{x}_0,\widehat{y}_0),
   	\end{aligned}
   	\]
   	and hence $h(\widehat{y}_0) - h(\widehat{x}_0) \leq \varepsilon_2 +L\cdot {\sf d}_0(\widehat{x}_0,\widehat{y}_0)$. Since $\varepsilon_1$ and $\varepsilon_2$ are arbitrary, we conclude that
   	\[
   	\left|h(\widehat{x}_0) -h(\widehat{y}_0)  \right| \leq L\cdot {\sf d}_0(\widehat{x}_0,\widehat{y}_0).
   	\]
   	This shows that $h$ is Lipschitz continuous with respect $\widehat{x}_0 \in \mathcal{M}_0$.
   \end{proof}
    \begin{dy}(Locally control condition)
    	\label{lcc}
    	The cost function $c$ satisfies the \text{locally control condition} if for $\mu_0$-a.e. $\widehat{x}_0 \in X_0$, there exist the same sequence $\{\widehat{x}_0^n\}_{n=1}^\infty \subset X_0$ from Definition~\ref{sd} converging to $\widehat{x}_0$ and for every $\omega=(\widehat{x}_0,\cdots,\widehat{x}_K) \in S(\widehat{x}_0)$,
    	\[
    	c(\widehat{x}_0^n,\widehat{x}_1,\cdots,\widehat{x}_K) -h(\widehat{x}_0^n) = o({\rm d}_0(\widehat{x}_0^n,x_0)).
    	\]
    \end{dy}
    \begin{theorem}
    	\label{MP}
    	Assume the lower semicontinuous cost function $c$ is locally control (discrete-type) sequential twisted and strongly coercive. If $M<\infty$, then there exists a unique Borel map $T^*: X_0 \to \Omega$ such that 
    	\begin{itemize}
    		\item[${\rm (1)}$] The coupling $\pi^*=T^*_\# \mu_0$ is the unique solution of (\ref{KS}).
    		\item[${\rm (2)}$] For  $\widehat{x}_0 \in X_0$, $T^*$ decomposes as $T^*(\widehat{x}_0)=(\widehat{x}_0,T_1^*(\widehat{x}_0),\cdots,T^*_K(\widehat{x}_0))$, where
    		\[
    		T^*_k(\widehat{x}_0):=
    		\begin{cases}
    			y^*_k&\text{if}\quad k \in I(T^*(\widehat{x}_0)),\\
    			\partial_k & \text{ otherwise},
    		\end{cases}
    		\]
    		and $y_k^* \in X_k$ is the k-th component of the path $T^*(\widehat{x}_0)$.
    	\end{itemize}
    \end{theorem}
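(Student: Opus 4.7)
\emph{Ingredients and reduction.} The plan is to extract an optimal Kantorovich coupling $\pi^*$, use dual potentials and cyclical monotonicity to set up a first-order identity on $\mathrm{supp}(\pi^*)$, and then invoke sequential twistedness to collapse $\pi^*$ to the graph of a deterministic map. Since strong coercivity implies coercivity and $M<\infty$, Theorem 2.2 provides an optimal coupling $\pi^*\in\Pi(\mu_0,\mu_1^*,\dots,\mu_{K-1}^*,\mu_K)$. Lemma 3.1 yields $c$-splitting functions $\{v_k\}_{k=0}^K$ with $\sum_k v_k(\widehat{x}_k)\le c(\omega)$ on $\Omega$ and equality on $S:=\mathrm{supp}(\pi^*)$, while Lemma 3.2 shows $S$ is $c$-cyclically monotone. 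Disintegrating $\pi^* = \int \pi^*_{\widehat{x}_0}\,\mu_0(\mathrm{d}\widehat{x}_0)$, the theorem reduces to showing that for $\mu_0$-a.e.\ $\widehat{x}_0$ the slice $S\cap T_0^{-1}(\widehat{x}_0)$ is a singleton.

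\emph{Slice uniqueness via twist.} Fix a $\mu_0$-generic $\widehat{x}_0$ at which sequential differentiability, local control, and twistedness all hold, and take two candidates $\omega_i=(\widehat{x}_0,\widehat{x}_1^i,\dots,\widehat{x}_K^i)\in S\cap T_0^{-1}(\widehat{x}_0)$ for $i=1,2$. Let $\{\widehat{x}_0^n\}$ be the sequence supplied by Definition 3.5. Subtracting the identity $v_0(\widehat{x}_0)+\sum_{k\ge 1}v_k(\widehat{x}_k^i)=c(\omega_i)$ from the inequality $v_0(\widehat{x}_0^n)+\sum_{k\ge 1}v_k(\widehat{x}_k^i)\le c(\widehat{x}_0^n,\widehat{x}_1^i,\dots,\widehat{x}_K^i)$ and dividing by $\mathsf{d}_0(\widehat{x}_0^n,\widehat{x}_0)$ gives
\[
\frac{v_0(\widehat{x}_0^n)-v_0(\widehat{x}_0)}{\mathsf{d}_0(\widehat{x}_0^n,\widehat{x}_0)}\;\le\;\frac{c(\widehat{x}_0^n,\widehat{x}_1^i,\dots,\widehat{x}_K^i)-c(\omega_i)}{\mathsf{d}_0(\widehat{x}_0^n,\widehat{x}_0)}\qquad(i=1,2).
\]
The local control condition recasts the right-hand side, up to an $o(1)$ error, as a common quantity depending only on $\widehat{x}_0$ and the sequence $\{\widehat{x}_0^n\}$ (essentially the directional derivative of $h$), so that in the limit $D_{\{\widehat{x}_0^n\},\,\widehat{x}_0}c(\omega_1)=D_{\{\widehat{x}_0^n\},\,\widehat{x}_0}c(\omega_2)$. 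Sequential twistedness then forces $\omega_1=\omega_2$; in the discrete-type setting the singleton property of $\mathcal{F}(\widehat{x}_0)$ replaces the differential argument.

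\emph{Construction, decomposition, and uniqueness.} A measurable selection promotes the a.e.\ singleton property of the slices to a Borel map $T^*:X_0\to\Omega$ with $\pi^*=(\mathrm{id},T^*)_\#\mu_0$; uniqueness of $\pi^*$ among Kantorovich minimizers follows from the standard midpoint argument: if $\pi'$ is any other optimizer then $\tfrac12(\pi^*+\pi')$ is optimal, its support is still $c$-cyclically monotone, so the same slice argument yields singleton fibers, and $\tfrac12(\pi^*_{\widehat{x}_0}+\pi'_{\widehat{x}_0})$ being a Dirac forces $\pi^*_{\widehat{x}_0}=\pi'_{\widehat{x}_0}$ for $\mu_0$-a.e.\ $\widehat{x}_0$. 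The decomposition $T^*(\widehat{x}_0)=(\widehat{x}_0,T_1^*(\widehat{x}_0),\dots,T_K^*(\widehat{x}_0))$ with $T_k^*(\widehat{x}_0)=\partial_k$ on the skipped indices is then immediate from the product structure $\Omega=\prod_k\widehat{X}_k$ and the definition of the active-index map $I$.

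\emph{Main obstacle.} The crux is the middle step: the local control condition is phrased for the unconstrained minimizer set $S(\widehat{x}_0)=\arg\min\{c(\omega):T_0(\omega)=\widehat{x}_0\}$, whereas $S\cap T_0^{-1}(\widehat{x}_0)$ is additionally constrained by the fixed terminal marginal $\mu_K$, so the two sets need not coincide a priori. Turning the $o(1)$ estimate into a genuine equality of derivatives along $\omega_1$ and $\omega_2$ therefore requires either establishing $S\cap T_0^{-1}(\widehat{x}_0)\subset S(\widehat{x}_0)$ for $\mu_0$-a.e.\ $\widehat{x}_0$—the hierarchical-jump mechanism is essential here, since a path in $S$ can realize $h(\widehat{x}_0)$ by skipping obstructive intermediate stages via $\partial_k$—or sharpening the local control estimate so that it applies directly to the dual-optimal paths sitting inside $S$.
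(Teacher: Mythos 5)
Your strategy is the same as the paper's: differentiate $c$ in the first coordinate along the distinguished sequence of Definition 3.1, use the local control condition to identify that derivative with the derivative of $h(\widehat{x}_0)=\inf\{c(\omega)\mid T_0(\omega)=\widehat{x}_0\}$ so that it is the same for every path in $S(\widehat{x}_0)$, invoke sequential twistedness to force the fibers of the optimal plan to be singletons, and finish with a measurable selection plus disintegration. The only real difference in execution is cosmetic: you route the first-order lower bound through the dual potential $v_0$ of Lemma 3.1, while the paper compares $c(\widehat{x}_0^n,\widehat{x}_1,\dots,\widehat{x}_K)$ directly with $h(\widehat{x}_0^n)$. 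Your midpoint argument for uniqueness (optimality of $\tfrac12(\pi^*+\pi')$, hence cyclical monotonicity of the union of the supports) is actually more careful than the paper's, which asserts without justification that the unique point of $S(\widehat{x}_0)\cap\mathrm{supp}(\pi')$ coincides with that of $S(\widehat{x}_0)\cap\mathrm{supp}(\pi^*)$ even though the twist condition is stated relative to a single cyclically monotone set.

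The obstacle you flag at the end is genuine, and you should know that the paper's own proof does not resolve it: after showing that $S(\widehat{x}_0)\cap\mathrm{supp}(\pi^*)$ contains at most one point, the paper simply asserts $\kappa_{\widehat{x}_0}\bigl(S(\widehat{x}_0)\cap\mathrm{supp}(\pi^*)\bigr)=1$, i.e.\ that the conditional measures of the optimal plan are concentrated on the unconstrained fiberwise minimizers of $c$. This does not follow from the stated hypotheses and is false in familiar special cases (for $K=1$ and quadratic cost on $\mathbb{R}^d\times\mathbb{R}^d$ one has $S(x_0)=\{(x_0,x_0)\}$, yet the optimal plan is not supported on the diagonal unless $\mu_K=\mu_0$): the terminal marginal constraint can force $\widehat{x}_0$ onto a path that does not realize $h(\widehat{x}_0)$. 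The standard repair is exactly the second option you mention: run the local control and differentiation argument not against $h$ but against the $c$-transform $\widehat{x}_0\mapsto\inf\{c(\omega)-\sum_{k\ge1}v_k(\widehat{x}_k)\mid T_0(\omega)=\widehat{x}_0\}$, whose argmin set does contain the fiber of $\mathrm{supp}(\pi^*)$ by Lemma 3.1; your dual-potential setup is already pointed in that direction. As written, both your argument and the paper's leave this step open, so neither constitutes a complete proof of the theorem under the hypotheses as stated.
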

    \begin{proof}[Proof of Theorem~\ref{MP}]
    		Let $\{\pi^*,\{\mu_k^*\}_{k=1}^{K-1}\}$ be a solution of (\ref{KS}). Since $c$ is sequential twisted, for $\omega=(\widehat{x}_0,\cdots,\widehat{x}_K) \in \Omega$, there exists sequence $\{\widehat{x}_0^n\}_{n=1}^\infty \subset X_0$ converging to $\widehat{x}_0$ and  the limit
    		\[
    		D_{\{\widehat{x}_0^n\},\,\widehat{x}_0} c(\omega) := \lim_{n \to \infty} \frac{c(\widehat{x}_0^n,\widehat{x}_1,\cdots,\widehat{x}_K) - c(\widehat{x}_0,\widehat{x}_1,\cdots\,\widehat{x}_K)}{{\sf d}_0(\widehat{x}_0^n,\widehat{x}_0)}
    		\]
    		exists in $\mathbb{R}$. Since $c$ is locally control,
    		\[
    		\frac{c(\widehat{x}_0^n,\widehat{x}_1,\cdots,\widehat{x}_K) - c(\widehat{x}_0,\widehat{x}_1,\cdots\,\widehat{x}_K)}{{\sf d}_0(\widehat{x}_0^n,\widehat{x}_0)} = \frac{h(\widehat{x}_0^n)+o({\sf d}_0 (\widehat{x}_0^n,\widehat{x}_0))-h(\widehat{x}_0)}{{\sf d}_0(\widehat{x}_0^n,\widehat{x}_0)}.
    		\]
    		Therefore,
    		\[
    		\lim_{n \to \infty}\frac{c(\widehat{x}_0^n,\widehat{x}_1,\cdots,\widehat{x}_K) - c(\widehat{x}_0,\widehat{x}_1,\cdots\,\widehat{x}_K)}{{\sf d}_0(\widehat{x}_0^n,\widehat{x}_0)} = \lim_{n \to \infty}\frac{h(\widehat{x}_0^n) -h(\widehat{x}_0)}{{\sf d}_0(\widehat{x}_0^n,\widehat{x}_0)},
    		\]
    		and $D_{\{\widehat{x}_0^n\} ,\,\widehat{x}_0}c(\omega)=D_{\{\widehat{x}_0^n\} ,\,\widehat{x}_0}h$ for any $\omega \in S(\widehat{x}_0)$. Suppose there exists $\omega_1\in S(\widehat{x}_0)$ and $\omega_2 \in S(\widehat{x}_0)$ such that $\omega_1\neq \omega_2$. Since $\omega_1$ and $\omega_2$ are not equal, by $c$ is sequential twisted, $\displaystyle D_{\{\widehat{x}_0^n\} ,\,\widehat{x}_0}c(\omega_1) \neq D_{\{\widehat{x}_0^n\} ,\,\widehat{x}_0}c(\omega_2)$. But $\omega_1 ,\omega_2 \in S(\widehat{x}_0)$,
    		\[
    		D_{\{\widehat{x}_0^n\} ,\,\widehat{x}_0}c(\omega_1)= D_{\{\widehat{x}_0^n\} ,\,\widehat{x}_0}h=D_{\{\widehat{x}_0^n\} ,\,\widehat{x}_0}c(\omega_2),
    		\]
    		contradiction. Then $S(\widehat{x}_0)$ is  a singleton. In particular, if Polish spaces $X_0$ that lack a nontrivial curve structure, $S(\widehat{x}_0)=\mathcal{F}(\widehat{x}_0)$ is also a singleton. For each $\widehat{x}_0 \in X_0$, define $T^*(\widehat{x}_0)$ as the unique element of $S(\widehat{x}_0)$. Consider
    		\[
    		\begin{aligned}
    			G_r(T^*):&=\{(\widehat{x}_0,\omega) \in X_0 \times \Omega \mid \omega \in S(\widehat{x}_0) \} \\
    			&=\{(\widehat{x}_0,\omega) \in X_0 \times \Omega \mid \omega \in  {\rm supp}(\pi^*),\,T_0(\omega)=\widehat{x}_0\} \cap \{ (\widehat{x}_0,\omega) \in X_0 \times \Omega \mid c(\omega) =h(\widehat{x}_0)\}.
    		\end{aligned}
    		\]
    		Since $\{(\widehat{x}_0,\omega) \in X_0 \times \Omega \mid \omega \in  {\rm supp}(\pi^*),\,T_0(\omega)=\widehat{x}_0\} $ is a closed set and $ \{ (\widehat{x}_0,\omega) \in X_0 \times \Omega \mid c(\omega) =h(\widehat{x}_0)\}$ is a Borel set, then  $G_r$ is a Borel set and hence $T^*$ is a Borel map (see  \cite{MR188994}). By the disintegration theorem, for any Borel set $A \subset \Omega$
    		\[
    		\pi^*(A)=\int_{X_0} [\delta_{\widehat{x}_0} \otimes \kappa_{\widehat{x}_0}](A)\ \mu_0({\rm d}\widehat{x}_0),
    		\]
    		where $\delta_{x_0}$ is a Dirac measure and $\kappa_{\widehat{x}_0}$ is the conditional probability measure of $\omega$  given $\widehat{x}_0$. Since $\kappa_{\widehat{x}_0}(S(\widehat{x}_0) )=1$  and $S(\widehat{x}_0) $ is a singleton,  then $\kappa_{\widehat{x}_0}=\delta_{T^*(\widehat{x}_0)}$. Therefore
    		\[
    		\pi^*(A)=\int_{X_0} [\delta_{\widehat{x}_0} \otimes \kappa_{\widehat{x}_0}](A)\ \mu_0({\rm d}\widehat{x}_0)=T^*_\# \mu_0(A).
    		\]
    		Let $\{\pi',\{\mu'_k\}_{k=1}^{K-1}\}$ be another optimal solution of (\ref{KS}). By the disintegration theorem, for any Borel set $A \subset \Omega$
    		\[
    		\pi'(A)=\int_{X_0} [\delta_{\widehat{x}_0} \otimes \kappa'_{\widehat{x}_0}](A)\ \mu_0({\rm d} \widehat{x}_0). 
    		\]
    		Similarly, $S(\widehat{x}_0) $ is also a singleton and $\kappa_{\widehat{x}_0}(S(\widehat{x}_0) )=1$, then $\kappa'_{\widehat{x}_0}=\delta_{T^*(\widehat{x}_0)}=\kappa_{\widehat{x}_0}$. Hence,
    		\[
    		\pi'(A)=\int_{X_0} [\delta_{\widehat{x}_0} \otimes \kappa'_{\widehat{x}_0}](A)\ \mu_0({\rm d} \widehat{x}_0)=\int_{X_0} [\delta_{\widehat{x}_0} \otimes \kappa_{\widehat{x}_0}](A)\ \mu_0({\rm d} \widehat{x}_0)=\pi^*(A).
    		\] 		
    		So we have proved that Theorem~\ref{MP}.
    \end{proof}
    \begin{remark}
    	The local control condition can be formulated either with the minimal cost function $h(\widehat{x}_0)$ or, alternatively, with the Kantorovich duality functions $v_0$ as in the proof of Theorem~\ref{MP}
    	\[
    	c(\widehat{x}_0^n,\widehat{x}_1,\cdots,\widehat{x}_K)-v_0(\widehat{x}_0^n)=o({\rm d}_0(\widehat{x}_0^n,x_0)).
    	\]
    	These two formulations are not equivalent in general, yet each of them, under suitable regularity assumptions, suffices to guarantee the uniqueness of the Monge solution.
    \end{remark}
    For $i=0,\cdots,K-1$, if $\{\pi^*,\{\mu_k^*\}_{k=1}^{K-1}\}$ is the unique minimizer of the optimization problem (\ref{KS}), we define the minimal cost function as
    \[
    \widehat{c}_{i,i+1}(\widehat{x}_i, \widehat{x}_{i+1}) :=  
    \begin{cases} 
    	c_{i,i+1}(x_i, x_{i+1}) &\text{if}\quad \widehat{x}_i \in X_i \quad\text{and}\quad\widehat{x}_{i+1} \in X_{i+1}, \\
    	c_{i,k_j}(x_i, x_{k^i_j}) &\text{if}\quad\widehat{x}_i \in X_i \quad\text{and}\quad \widehat{x}_{i+1} = \partial_{i+1}, \\
    	0&\text{otherwise},
    \end{cases}
    \]
    where $k^i_j \in I(T^*( {T^*_i}_\#(\widehat{x}_i)))$ satisfies $\displaystyle k^i_j=\arg\min_{\substack{i+1<j\leq K\\\widehat{x}_j \in X_j}} I(T^*( {T^*_i}_\#(\widehat{x}_i)))$. Therefore, $\displaystyle c(\omega)=\sum_{i=0}^{K-1}\widehat{c}_{i,i+1}(\widehat{x}_i,\widehat{x}_{i+1})$ for  $\pi^*$-almost every $\omega$.
    \begin{theorem}
    	\label{emp}
    	Fix $\mu_0\in\mathcal{P}(X_0)$ and $\mu_K \in \mathcal{P}(X_K)$. If $\{\pi^*,\{\mu_k^*\}_{k=1}^{K-1}\}$ is the unique minimizer of the optimization problem (2.1), then
    	\[
    	M=\sum_{i=0}^{K-1}\widehat{C}_{i,i+1}(\mu^*_i,\mu^*_{i+1}),
    	\]
    	where $\displaystyle\widehat{C}_{i,i+1}(\mu_i^*,\mu_{i+1}^*):=\inf_{\pi_i\in\Pi(\mu_i^*,\mu_{i+1}^*)}\int_{\widehat{X}_i \times \widehat{X}_{i+1}}\widehat{c}_{i,i+1}(\widehat{x}_i,\widehat{x}_{i+1}) \pi_i({\rm d}\widehat{x}_i,{\rm d}\widehat{x}_{i+1})$.
    \end{theorem}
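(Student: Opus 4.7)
The plan is to prove $M = \sum_{i=0}^{K-1}\widehat{C}_{i,i+1}(\mu_i^*,\mu_{i+1}^*)$ by establishing both inequalities, with the pointwise identity $c(\omega)=\sum_{i=0}^{K-1}\widehat{c}_{i,i+1}(\widehat{x}_i,\widehat{x}_{i+1})$ holding for $\pi^*$-a.e.\ $\omega$ (asserted just before the theorem) serving as the main tool.

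For the direction $M \geq \sum_{i=0}^{K-1}\widehat{C}_{i,i+1}(\mu_i^*,\mu_{i+1}^*)$, I would introduce the pairwise marginals $\pi_i^* := (T_i, T_{i+1})_\#\pi^*$, which belong to $\Pi(\mu_i^*,\mu_{i+1}^*)$ by the marginal constraints on $\pi^*$. Since each $\widehat{c}_{i,i+1}$ depends only on the pair $(\widehat{x}_i,\widehat{x}_{i+1})$, integrating the decomposition against $\pi^*$ gives
\[
M = \int_\Omega c\,d\pi^* = \sum_{i=0}^{K-1}\int_{\widehat{X}_i\times\widehat{X}_{i+1}} \widehat{c}_{i,i+1}\,d\pi_i^* \geq \sum_{i=0}^{K-1}\widehat{C}_{i,i+1}(\mu_i^*,\mu_{i+1}^*),
\]
where the final inequality is the definition of each $\widehat{C}_{i,i+1}$ as an infimum.

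For the reverse inequality, the cleanest route is to show that each $\pi_i^*$ is already optimal for its pairwise problem, forcing equality throughout the chain above. I would argue by contradiction: if some $\tilde\pi_i \in \Pi(\mu_i^*,\mu_{i+1}^*)$ satisfied $\int\widehat{c}_{i,i+1}\,d\tilde\pi_i < \int\widehat{c}_{i,i+1}\,d\pi_i^*$, I would disintegrate $\pi^*$ with respect to $(T_i, T_{i+1})$, swap in $\tilde\pi_i$ for the $(i,i+1)$-block, and re-assemble a competitor $\tilde\pi \in \Pi(\mu_0,\mu_1^*,\ldots,\mu_{K-1}^*,\mu_K)$ via the iterative disintegration-and-gluing scheme from the proof of Theorem 2.1. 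The strict decrease in the $i$-th block combined with unchanged contributions elsewhere should then yield $\int c\,d\tilde\pi < M$, contradicting the optimality (and uniqueness, via Theorem 3.1) of $\pi^*$.

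The main obstacle will be converting the decrease in $\int\widehat{c}_{i,i+1}\,d\tilde\pi_i$ into a genuine decrease of $\int c\,d\tilde\pi$: this requires the decomposition $c = \sum_{i}\widehat{c}_{i,i+1}$, or at least the inequality $c \leq \sum_{i}\widehat{c}_{i,i+1}$, to persist on $\mathrm{supp}(\tilde\pi)$. Since $\widehat{c}_{i,i+1}$ is defined through the ``next active index'' $k_j^i$ extracted from the Monge trajectory $T^*$, rebuilt couplings need not automatically concentrate on paths whose active-index pattern matches the reference. I expect to handle this by exploiting the $c$-cyclical monotonicity of $\mathrm{supp}(\pi^*)$ (Lemma 3.2): any path in $\mathrm{supp}(\tilde\pi)$ with mismatched active-index structure would otherwise witness a profitable finite reshuffle violating the monotonicity of $\pi^*$. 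Once this compatibility is secured, the replacement argument runs through and the identity $M = \sum_{i=0}^{K-1}\widehat{C}_{i,i+1}(\mu_i^*,\mu_{i+1}^*)$ follows.
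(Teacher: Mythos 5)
Your lower bound $M \ge \sum_{i}\widehat{C}_{i,i+1}(\mu_i^*,\mu_{i+1}^*)$ is exactly the paper's argument and is fine: push $\pi^*$ forward under $(T_i,T_{i+1})$, use the $\pi^*$-a.e.\ decomposition $c=\sum_i\widehat{c}_{i,i+1}$, and invoke the definition of the infimum. For the upper bound you take a different route from the paper: the paper glues $\varepsilon$-optimal pairwise couplings $\pi_i^*\in\Pi(\mu_i^*,\mu_{i+1}^*)$ into a global competitor $\pi$ via the construction of Theorem~2.1 and writes $\int_\Omega c\,{\rm d}\pi=\sum_i\int\widehat{c}_{i,i+1}\,{\rm d}\pi_i^*$, whereas you aim to show each marginal $(T_i,T_{i+1})_\#\pi^*$ is already optimal for its two-marginal problem by a swap-and-contradict argument. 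Both routes hit the same wall, which you correctly identify: the identity $c(\omega)=\sum_i\widehat{c}_{i,i+1}(\widehat{x}_i,\widehat{x}_{i+1})$ is only known $\pi^*$-a.e., and any rebuilt coupling $\tilde\pi$ requires at least the inequality $c\le\sum_i\widehat{c}_{i,i+1}$ on ${\rm supp}(\tilde\pi)$.

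Your proposed fix, however, does not close this gap. $c$-cyclical monotonicity (Lemma~3.2) constrains finite families of points drawn from ${\rm supp}(\pi^*)$ and their coordinate permutations; the glued competitor $\tilde\pi$ is constructed only to have prescribed pairwise marginals, so its support is in no way confined to coordinate permutations of points of ${\rm supp}(\pi^*)$, and cyclical monotonicity says nothing about it. Concretely, $\widehat{c}_{i,i+1}(\widehat{x}_i,\partial_{i+1})$ is defined as $c_{i,k^i_j}(x_i,x_{k^i_j})$, where $k^i_j$ is the next active index along the \emph{optimal} trajectory through $x_i$; a path $\omega\in{\rm supp}(\tilde\pi)$ passing through $(\widehat{x}_i,\partial_{i+1})$ may next visit a different space at a different point, in which case $\sum_i\widehat{c}_{i,i+1}(\widehat{x}_i,\widehat{x}_{i+1})$ neither equals nor dominates $c(\omega)$, and the strict decrease in the $i$-th block cannot be converted into $\int c\,{\rm d}\tilde\pi<M$. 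For what it is worth, the paper's own proof asserts the corresponding equality for its glued coupling without justification, so you have located the genuine difficulty of this theorem; but as written your argument, like the paper's, is incomplete at exactly this step and would need an additional structural hypothesis (or a direct verification that the glued coupling concentrates on paths with the reference active-index pattern) to go through.
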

    \begin{proof}
    	Let  $\pi^* \in \Pi ( \mu_0,\mu_1^* , \cdots, \mu_{K-1}^*,\mu_K )$ be a  unique minimizer of the optimization problem (\ref{KS}), satisfying
    	\[
    	(T_k)_\# \pi^*=\mu_k^* \quad\text{for every}\quad k=0,\cdots,K,
    	\]
    	where $\mu_0^*:=\mu_0$ and $\mu_K^*:=\mu_K$. For $\omega=(\widehat{x}_0,\cdots,\widehat{x}_K) \in \Omega$, we define the projection map $(T_i,T_{i+1}) : \Omega \to \widehat{X}_i \times \widehat{X}_{i+1}$ as
    	\[
    	(T_i,T_{i+1})(\omega):= (\widehat{x}_i,\widehat{x}_{i+1}).
    	\]
    	Then, the projection coupling $\pi_i^\prime$ defined as a pushforward measure 
    	\[
    	\pi_i^\prime:=\left(T_i,T_{i+1}\right)_\#\pi^*.
    	\]
    	Since the projection map  $(T_i,T_{i+1})$ is continuous (as a projection in a product space), this is well-defined. For every Borel sets $A \subset \widehat{X}_i$ and $B \subset\widehat{X}_{i+1}$,
    	\[
    	\begin{aligned}
    		&\pi_i^\prime(A \times \widehat{X}_{i+1})=\pi^*(\{\omega\mid \widehat{x}_i \in A \})={T_i}_\# \pi^*(A)=\mu_i^*(A),\\
    		&\pi_i^\prime(\widehat{X}_i \times B)=\pi^*(\{\omega\mid \widehat{x}_{i+1} \in B \})={T_{i+1}}_\# \pi^*(B)=\mu_{i+1}^*(B).
    	\end{aligned}
    	\]
    	Thus, $\pi^\prime_i \in \Pi (\mu_i^*,\mu_{i+1}^* )$. For each  $i=0,\cdots,K-1$ and $\varepsilon_i>0$, there exists $\pi_i^* \in \Pi(\mu_i^*,\mu_{i+1}^*)$ such that
    	\[
    	\int_{\widehat{X}_i \times \widehat{X}_{i+1}} \widehat{c}_{i,i+1}\ {\rm d}\pi_i^* \leq \widehat{C}_{i,i+1}(\mu^*_i,\mu^*_{i+1})+\varepsilon_i.
    	\]
    	By the construction used in the proof of Theorem~\ref{Mfinity}, for the given couplings $\{\pi_i^*\}_{i=0}^{K-1}$ where $\pi_i^*\in \Pi (\mu_i^*,\mu_{i+1}^* )$, there exists a global coupling $\pi \in \Pi(\mu_0,\mu_1^*,\cdots,\mu_{K-1}^*,\mu_K )$ such that $(T_i,T_{i+1})_\#\pi=\pi_i^*$ for $i=0,\cdots,K-1$ and ${T_k}_\#\pi=\mu_k^*$ for $ k=0,\cdots,K$. Then 
    	\[
    	\begin{aligned}
    		\int_{\Omega} c(\omega)\ \pi^*({\rm d}\omega)&\leq \int_{\Omega} c(\omega)\ \pi({\rm d}\omega)\\
    		&=\sum_{i=0}^{K-1}\int_{\widehat{X}_i \times \widehat{X}_{i+1}} \widehat{c}_{i,i+1}(\widehat{x}_i,\widehat{x}_{i+1}) \ \pi^*_i({\rm d}\widehat{x}_i,{\rm d}\widehat{x}_{i+1})\\
    		&\leq\sum_{i=0}^{K-1}\left( \widehat{C}_{i,i+1}(\mu_i^*,\mu_{i+1}^*)+\varepsilon_i \right).
    	\end{aligned}
    	\]
    	Therefore $\displaystyle M \leq \sum_{i=0}^{K-1}\left( \widehat{C}_{i,i+1}(\mu^*_i,\mu^*_{i+1})+\varepsilon_i \right)$. On the other hand, 
    	\[
    	\begin{aligned}
    		\int_{\Omega} c \ {\rm d}\pi^*
    		&=\sum_{i=0}^{K-1}\int_{\widehat{X}_i \times \widehat{X}_{i+1}} \widehat{c}_{i,i+1}\ {\rm d}\pi_i^\prime \geq \sum_{i=0}^{K-1} \widehat{C}_{i,i+1}(\mu_i^*,\mu_{i+1}^*).
    	\end{aligned}
    	\]
    	Thus, $\displaystyle\sum_{i=0}^{K=1} \widehat{C}_{i,i+1}(\mu_i^*,\mu_{i+1}^*) \leq M \leq \sum_{i=0}^{K-1}\left( \widehat{C}_{i,i+1}(\mu_i^*,\mu_{i+1}^*)+\varepsilon_i \right)$. Letting each $\varepsilon_{i} \to 0$, we obtain $\displaystyle M = \sum_{i=0}^{K-1} \widehat{C}_{i,i+1}(\mu^*_i,\mu^*_{i+1} ) $.
    \end{proof}
    
    \section{HJMOT on Smooth Manifolds}
    \label{sec4}
    The HJMOT framework can be applied naturally to Riemannian geometry. Consider complete boundaryless Riemannian manifolds $(\mathcal{M}_0, g_0)$ and $(\mathcal{M}_K, g_K)$ as source and target spaces. Intermediate spaces are augmented as $\widehat{\mathcal{M}}_k := \mathcal{M}_k \cup \{\partial_k\}$ for $k = 1, \cdots, K-1$, where each $\mathcal{M}_k$ is a complete Riemannian manifolds and $\partial_k$ is a topologically isolated point. The path space $\displaystyle\Omega := \mathcal{M}_0 \times \left( \prod_{k=1}^{K-1} \widehat{\mathcal{M}}_k \right) \times \mathcal{M}_K$ is endowed with the product topology. Given a family of lower semicontinuous pairwise cost functions 
    \[
    \{c_{i,j} :\mathcal{M}_i \times \mathcal{M}_j \to [0,+\infty] \  | \ 0\leq i<j\leq K   \},
    \]
    we define the path cost as
    \[
    c(\omega):=\sum_{l=0}^{n-2}c_{k_l,k_{l+1}}(x_{k_l},x_{k_{l+1}})\quad\text{for}\quad\Psi(\omega)=(x_{k_0},
    	\cdots,x_{k_{n-1}}),
    \]
    where $\Psi(\omega)$ denotes the sequence of visited points extracted from $\omega$ (as defined in Section~\ref{sec2}).
    \begin{theorem}[Kantorovich solution]
    	\label{RKP}
    	We fix $\mu_0\in\mathcal{P}(\mathcal{M}_0)$ and $\mu_K \in \mathcal{P}(\mathcal{M}_K)$. We seek to solve the following  optimization problem
    	\begin{equation}
    		\label{RK}
    		M:=\inf_{ \substack{ \mu_k\in \mathcal{P}(\widehat{\mathcal{M}}_k)\text{ for } k=1,\cdots,\,K-1 \\ \pi \in \Pi(\mu_0,\cdots,\,\mu_K)} } \int_{\Omega} c\ {\rm d}\pi,
    	\end{equation}
    	where $\Pi(\mu_0,\cdots,\mu_K)=\{ \pi \in \mathcal{P}(\Omega) \mid {T_k}_\# \pi = \mu_k \text{ for  }k\in\{0,\cdots,K\}  \}$.  If $M$ in (4.1) is finite and $c$ is coercive, then the optimization problem (4.1) has minimizer $\{\pi^*,\{\mu_k^*\}_{k=1}^{K-1}\}$.
    \end{theorem}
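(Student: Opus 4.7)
The plan is to reduce Theorem 4.1 directly to Theorem 2.2 rather than to repeat the tightness and weak compactness argument. As stressed in Remark 2.2, the existence proof in Section 2 depends only on three ingredients: that each of the factor spaces is Polish, that the path cost $c$ is lower semicontinuous on $\Omega$, and that the coercivity condition (Definition 2.1) holds. So my strategy is simply to verify that the Riemannian setup of Section 4 fits this abstract framework, and then to invoke Theorem 2.2 as a black box.

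First I would check the Polish structure. A complete Riemannian manifold $(\mathcal{M}_k,g_k)$, endowed with its Riemannian distance, is a complete, separable metric space, hence Polish. Adjoining a topologically isolated point $\partial_k$ gives the disjoint union $\widehat{\mathcal{M}}_k=\mathcal{M}_k\sqcup\{\partial_k\}$, which remains completely metrizable and separable (one can, for instance, rescale the metric on $\mathcal{M}_k$ into $[0,1)$ and declare $\mathsf{d}(x,\partial_k)=1$ for all $x\in\mathcal{M}_k$), so $\widehat{\mathcal{M}}_k$ is Polish; and the product $\Omega=\mathcal{M}_0\times\prod_{k=1}^{K-1}\widehat{\mathcal{M}}_k\times\mathcal{M}_K$, being a finite product of Polish spaces equipped with the product topology, is Polish as well. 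This is exactly the structural hypothesis used throughout Section 2.

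Next I would verify lower semicontinuity and tightness. Since the pairwise costs $c_{i,j}$ are assumed lower semicontinuous on $\mathcal{M}_i\times\mathcal{M}_j$, the argument of Lemma 2.1 applies verbatim in the manifold setting, giving lower semicontinuity of $c$ on $\Omega$: the key point there was that near any path $\omega$ with active indices $(k_0,\dots,k_{n-1})$ the indices of nearby paths $\omega_n$ must eventually contain those of $\omega$ (because $\partial_k$ is isolated), so the sum on a relevant subfamily of indices is lower semicontinuous by Fatou. Given the assumed coercivity of $c$, Lemma 2.3 then guarantees that any minimizing sequence $\{\pi_n\}\subset\Pi(\mu_0,\mu_1^n,\dots,\mu_{K-1}^n,\mu_K)$ is uniformly tight on the Polish space $\Omega$.

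With these ingredients in place, the conclusion follows by repeating the argument of Theorem 2.2: apply Prokhorov's theorem to extract a subsequence $\pi_{n_j}\rightharpoonup\pi^*\in\mathcal{P}(\Omega)$, observe that the continuous projections $T_k:\Omega\to\widehat{\mathcal{M}}_k$ yield weak convergence of the intermediate marginals $\mu_k^{n_j}={T_k}_\#\pi_{n_j}\rightharpoonup\mu_k^*:={T_k}_\#\pi^*$ while preserving the fixed endpoint marginals $\mu_0$ and $\mu_K$, and finally use lower semicontinuity of $c$ together with the assumption $M<\infty$ to conclude $\int c\,\mathrm{d}\pi^*\leq\liminf_j\int c\,\mathrm{d}\pi_{n_j}=M$. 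Hence $\{\pi^*,\{\mu_k^*\}_{k=1}^{K-1}\}$ is a minimizer. I expect no genuine obstacle here: the only non-routine check is that the augmented manifold $\widehat{\mathcal{M}}_k$ is Polish, and that is a short topological observation once the isolated-point construction is made explicit.
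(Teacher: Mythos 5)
Your proposal is correct and is exactly the route the paper intends: the paper gives no separate proof of Theorem 4.1, relying instead on Remark 2.2, which states that Theorem 2.2 depends only on the Polish structure, the lower semicontinuity of $c$, and coercivity, so that the result transfers to Section 4 ``without repeating arguments.'' Your verification that complete Riemannian manifolds and their isolated-point augmentations are Polish, together with the verbatim transfer of Lemma 2.1 and Lemma 2.3, is precisely the reduction the paper has in mind.
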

    \begin{remark}
    	Analogous to Theorem~\ref{Mfinity}, the finiteness of $M$ in (\ref{RKP}) is guaranteed under the condition that there exist intermediate measures $\mu_k \in \mathcal{P}(\widehat{\mathcal{M}}_k)$ for $k=1,\cdots,K-1$ such that $\displaystyle\sum_{i=0}^{K-1} \tilde{C}_{i,i+1}(\mu_i,\mu_{i+1}) < \infty$.
    \end{remark}
    To address directional derivatives, fix a path $\omega = (\widehat{x}_0, \cdots, \widehat{x}_K)$ and let $\gamma(t) = \exp_{\widehat{x}_0}(tv)$ denote the geodesic satisfying $\gamma(0) = \widehat{x}_0$ and $\dot{\gamma}(0) = v$ for $t\in[0,1]$.
    \begin{dy}[Differentiability along geodesics]
    	\label{DAG}
    	For a fixed path $\omega=(\widehat{x}_0,\cdots,\widehat{x}_K) \in \Omega$, we say that the 
    	lower semicontinuous cost function $c(\omega)$ is \textit{differentiability along geodesics} at $\widehat{x}_0$ if for any $v \in T_{\widehat{x}_0} \mathcal{M}_0$, the limit 
    	\[
    	D_{v,\,,\widehat{x}} c(\omega):=\left.\frac{\mathrm{d}}{\mathrm{d}t}\right|_{t=0} 
    	c(\exp_{x_0}(t v),\,x_1, \dots, x_K)
    	\] 
    	always exists in $\mathbb{R}$ and uniquely determined by $v$ (i.e. independent 
    	of the particular curve chosen to approach $x_0$ in direction $v$).
    \end{dy}
    \begin{dy}[Geodesic twistedness condition]
    	\label{GTC}
    	A  cost function $c$ is called \textit{geodesic twisted} if $c$ is differentiability along geodesics at $\widehat{x}_0$ and the mapping
    	\[
    	(\widehat{x}_1,\cdots,\widehat{x}_K) \mapsto D_{v,\,\widehat{x}_0} c(\widehat{x}_0,\widehat{x}_1,\cdots,\widehat{x}_K)
    	\]
    	is injective on the set $S_{\widehat{x}_0}:=\{\omega=(\widehat{x}_0,\widehat{x}_1,\cdots,\widehat{x}_K) \in S \mid  \widehat{x}_i \in \widehat{X}_i \text{ for }1 \leq i \leq K\}$ , for every $v \in T_{\widehat{x}_0}\mathcal{M}_0$,  where $S$ is a $c$-cyclically monotone set.
    \end{dy}
    \begin{dy}[Locally control condition]
    	\label{Rlcc}
    	The cost function $c: \Omega \to [0, +\infty]$ is said to satisfy the \textit{locally control condition} if for $\mu_0$-a.e. $\widehat{x}_0 \in \mathcal{M}_0$, for every $v \in T_{\widehat{x}_0}\mathcal{M}_0$ and every $\omega=(\widehat{x}_0,\cdots,\widehat{x}_K) \in S(\widehat{x}_0)$, 
    	\[
    	c({\rm exp}_{\widehat{x}_0}(tv),\widehat{x}_1,\cdots,\widehat{x}_K ) - h({\rm exp}_{\widehat{x}_0}(tv))  = o(t).
    	\]
    	Here $h(\widehat{x}_0) := \inf\{c(\omega) \mid \omega \in {\rm supp}(\pi^*),\ T_0(\omega) = \widehat{x}_0\}$ and $S(\widehat{x}_0) := \{\omega \in {\rm supp}(\pi^*)\mid T_0(\omega) = \widehat{x}_0,\ c(\omega) = h(\widehat{x}_0)\}$
    \end{dy}
    \begin{lemma}
    	\label{edy}
    	Let $(\mathcal{M}_0, g_0)$ be a smooth Riemannian manifold, and let ${\sf d}_0$ be the Riemannian distance function induced by $g_0$. Assume the cost function $c$ on $\Omega$ is differentiability along geodesics $\widehat{x}_0 \in \mathcal{M}_0$ in the sense of Definition~\ref{DAG}. Then $c$ is sequential differentiable at $\widehat{x}_0$ in the sense of Definition~\ref{sd}.
    \end{lemma}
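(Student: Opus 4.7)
The plan is to construct an explicit sequence that realizes sequential differentiability via a radial geodesic issued from $\widehat{x}_0$, thereby reducing the difference quotient in Definition 3.1 to the geodesic directional derivative guaranteed by Definition 4.1.

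First, I would fix an arbitrary unit tangent vector $v \in T_{\widehat{x}_0}\mathcal{M}_0$ with $\|v\|_{g_0}=1$. Since $(\mathcal{M}_0,g_0)$ is complete and boundaryless, the Hopf--Rinow theorem guarantees that $\exp_{\widehat{x}_0}$ is defined on all of $T_{\widehat{x}_0}\mathcal{M}_0$. Define
\[
\widehat{x}_0^n := \exp_{\widehat{x}_0}\!\left(\tfrac{v}{n}\right), \qquad n \in \mathbb{N}.
\]
For $n$ large enough that $1/n$ is strictly smaller than the injectivity radius at $\widehat{x}_0$, the radial geodesic $t\mapsto \exp_{\widehat{x}_0}(tv)$ is length-minimizing on $[0,1/n]$, and the Gauss lemma yields
\[
{\sf d}_0(\widehat{x}_0^n,\widehat{x}_0) \;=\; \tfrac{1}{n}\|v\|_{g_0} \;=\; \tfrac{1}{n},
\]
so that $\widehat{x}_0^n \to \widehat{x}_0$ in $\mathcal{M}_0$.

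Next, I would substitute this sequence directly into the difference quotient of Definition 3.1. Writing $t_n:=1/n$,
\[
\frac{c(\widehat{x}_0^n,\widehat{x}_1,\ldots,\widehat{x}_K) - c(\widehat{x}_0,\widehat{x}_1,\ldots,\widehat{x}_K)}{{\sf d}_0(\widehat{x}_0^n,\widehat{x}_0)}
\;=\; \frac{c(\exp_{\widehat{x}_0}(t_n v),\widehat{x}_1,\ldots,\widehat{x}_K) - c(\widehat{x}_0,\widehat{x}_1,\ldots,\widehat{x}_K)}{t_n}.
\]
As $n\to\infty$ we have $t_n\downarrow 0$, and the right-hand side is precisely the forward difference quotient defining $D_{v,\widehat{x}_0}c(\omega)$ along the geodesic $\gamma(t)=\exp_{\widehat{x}_0}(tv)$. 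Hence by Definition~4.1 the limit exists in $\mathbb{R}$ and equals $D_{v,\widehat{x}_0}c(\omega)$, so the sequential derivative $D_{\{\widehat{x}_0^n\},\widehat{x}_0}c(\omega)$ exists, with the sequence just constructed as an explicit witness.

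I do not anticipate a genuine obstacle here: the lemma is essentially a compatibility check between two differentiation procedures. The only piece of geometry used is the local distance identity ${\sf d}_0(\exp_{\widehat{x}_0}(tv),\widehat{x}_0)=t\|v\|_{g_0}$ for small $t$, which rests on the Gauss lemma and the positivity of the injectivity radius on a smooth Riemannian manifold; everything else is an immediate transport of definitions through the choice of a radial geodesic sequence.
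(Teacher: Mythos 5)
Your proposal is correct and follows essentially the same route as the paper: both construct the witnessing sequence $\widehat{x}_0^n=\exp_{\widehat{x}_0}(t_nv)$ along a radial geodesic, use the local identity ${\sf d}_0(\exp_{\widehat{x}_0}(t_nv),\widehat{x}_0)=t_n\|v\|_{g_0}$, and identify the resulting difference quotient with the geodesic directional derivative of Definition~4.1. The only cosmetic difference is that you normalize $v$ to a unit vector, whereas the paper keeps a general $v$ and carries the factor $1/\|v\|_{g_0}$.
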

    \begin{proof}
    	Assume the statement of Definition~\ref{DAG} holds. Then for  $v \in T_{\widehat{x}_0} \mathcal{M}_0$ such that the limit
    	\[
    	D_{v,\,\widehat{x}_0} c(\omega) := \left.\frac{\mathrm{d}}{\mathrm{d}t}\right|_{t=0} 
    	c(\exp_{\widehat{x}_0}(t v),\,\widehat{x}_1, \cdots, \widehat{x}_K)
    	\]
    	exists in $\mathbb{R}$ and is uniquely determined by $v$. Now consider the sequence $\{\widehat{x}_0^n\}$ defined by $\widehat{x}_0^n = \exp_{\widehat{x}_0}(t_n v)$ where $t_n \to 0^+$. Since the exponential map is smooth and ${\sf d}_0(\widehat{x}_0^n, \widehat{x}_0) = t_n \|v\|_{g_0}$ for sufficiently small $t_n$, we have
    	\[
    	\begin{aligned}
    		D_{\{\widehat{x}_0^n\} ,\,\widehat{x}_0}c(\omega)&=\lim_{n \to \infty} \frac{c(\widehat{x}_0^n, \widehat{x}_1, \cdots, \widehat{x}_K) - c(\widehat{x}_0, \widehat{x}_1, \cdots, \widehat{x}_K)}{{\sf d}_0(\widehat{x}_0^n, \widehat{x}_0)} \\
    		&= \frac{1}{\|v\|_{g_0}} \lim_{n \to \infty} \frac{c(\exp_{\widehat{x}_0}(t_n v), \widehat{x}_1, \cdots, \widehat{x}_K) - c(\widehat{x}_0, \widehat{x}_1, \cdots, \widehat{x}_K)}{t_n} \\
    		&= \frac{1}{\|v\|_{g_0}} D_{v,\,\widehat{x}_0} c(\omega).
    	\end{aligned}
    	\]
    	Therefore, the limit exists and equals $\frac{1}{\|v\|_{g_0}} D_{v,\,\widehat{x}_0} c(\omega)$. This shows that $c$ is sequential differentiable at $\widehat{x}_0$ in the sense of Definition~\ref{sd} along the sequence $\{\widehat{x}_0^n\}$.
    \end{proof}
    \begin{theorem}[Monge solution]
    	\label{RMP}
    	Assume the lower semicontinuous cost function $c$ on $\Omega$ is  locally control  geodesic twisted and strongly  coercive. If $M < \infty$, then there exists a unique Borel map $T^*: \mathcal{M}_0 \to \Omega$ such that 
    	\begin{itemize}
    		\item[${\rm (1)}$] The coupling $\pi^* = {T^*}_\# \mu_0$ is the unique solution of (\ref{RK}). 
    		\item[${\rm (2)}$] For $\mu_0$-a.e. $\widehat{x}_0 \in \mathcal{M}_0$, $T^*$ decomposes as 
    		$T^*(\widehat{x}_0)=(\widehat{x}_0, T_1^*(\widehat{x}_0), \cdots, T_K^*(\widehat{x}_0))$, where 
    		\[
    		T^*_k(\widehat{x}_0) := 
    		\begin{cases}
    			y^*_k, & \text{if}\quad k \in I(T^*(\widehat{x}_0)),\\
    			\partial_k, & \text{otherwise},
    		\end{cases}
    		\] 
    		and $y^*_k \in \mathcal{M}_k$ is the $k$-th component of the path $T^*(\widehat{x}_0)$.
    	\end{itemize}
    \end{theorem}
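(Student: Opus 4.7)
The plan is to reduce Theorem~4.2 to Theorem~3.1 by verifying that the Riemannian hypotheses imply their abstract Polish-space counterparts along a suitable sequence. First I would note that each augmented Riemannian space $\widehat{\mathcal{M}}_k$ is a Polish space (the complete Riemannian manifold $\mathcal{M}_k$ is Polish, and appending a topologically isolated point preserves this), so the framework of Section~3 applies directly. Existence of a Kantorovich minimizer $\{\pi^*,\{\mu_k^*\}_{k=1}^{K-1}\}$ follows from Theorem~4.1, since strong coercivity implies coercivity and $M < \infty$ is assumed, so the remaining task is to upgrade $\pi^*$ to a deterministic map.

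The key step is to bridge the two differentiability notions via Lemma~4.1. For $\mu_0$-a.e. $\widehat{x}_0 \in \mathcal{M}_0$ at which the local control assumption holds, I would pick any unit vector $v \in T_{\widehat{x}_0}\mathcal{M}_0$ and set $\widehat{x}_0^n := \exp_{\widehat{x}_0}(t_n v)$ with $t_n \to 0^+$. By Lemma~4.1, $c$ is sequentially differentiable along $\{\widehat{x}_0^n\}$ with $D_{\{\widehat{x}_0^n\},\widehat{x}_0} c(\omega) = D_{v,\widehat{x}_0} c(\omega)$. Hence geodesic twistedness, which asserts injectivity of $(\widehat{x}_1,\ldots,\widehat{x}_K) \mapsto D_{v,\widehat{x}_0} c(\omega)$ on $S_{\widehat{x}_0}$ for every $v$, transfers immediately to the sequential twistedness hypothesis of Definition~3.3 along $\{\widehat{x}_0^n\}$. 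Since ${\sf d}_0(\widehat{x}_0^n,\widehat{x}_0) = t_n$ for sufficiently small $t_n$, the Riemannian local control $c(\exp_{\widehat{x}_0}(tv),\widehat{x}_1,\ldots,\widehat{x}_K) - h(\exp_{\widehat{x}_0}(tv)) = o(t)$ holding for every $\omega \in S(\widehat{x}_0)$ rewrites exactly as Definition~3.5 along this sequence.

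Having recast every hypothesis in the form required by Theorem~3.1 (strong coercivity is identical in both settings), I would invoke that theorem to obtain a unique Borel map $T^*: \mathcal{M}_0 \to \Omega$ such that $\pi^* = T^*_{\#}\mu_0$ is the unique optimizer of (4.1). The coordinate-wise decomposition $T^*(\widehat{x}_0) = (\widehat{x}_0, T^*_1(\widehat{x}_0),\ldots,T^*_K(\widehat{x}_0))$ with $T^*_k(\widehat{x}_0) = \partial_k$ for $k \notin I(T^*(\widehat{x}_0))$ is then inherited verbatim from the abstract statement, since it records nothing more than the identification of skipped stages via the isolated points already built into $\Omega$.

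The main obstacle I anticipate is bookkeeping around quantifier order between the two layers of definitions. Definition~3.5 requires a single sequence $\{\widehat{x}_0^n\}$ producing the $o({\sf d}_0)$ estimate uniformly for every $\omega \in S(\widehat{x}_0)$, while Definition~4.3 phrases local control pointwise in $t$ for every tangent vector $v$ and every $\omega \in S(\widehat{x}_0)$. Fortunately, fixing one direction $v$ in the Riemannian statement already yields a sequence that works uniformly across $S(\widehat{x}_0)$, so this compatibility is automatic; the analogous remark for twistedness (injectivity for every $v$ is more than enough for the single $v$ used in the reduction) also holds. Provided this quantifier management is made explicit, no further work beyond citing Theorem~3.1 is needed.
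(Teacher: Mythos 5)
Your reduction to Theorem~3.1 via Lemma~4.1 is exactly the route the paper intends: the paper states Theorem~4.2 without a written proof precisely because Lemma~4.1 and the surrounding remarks are meant to show that differentiability along geodesics, geodesic twistedness, and the Riemannian local control condition instantiate the abstract hypotheses of Section~3 along the sequences $\widehat{x}_0^n = \exp_{\widehat{x}_0}(t_n v)$. Your explicit handling of the quantifier order (one fixed direction $v$ suffices, and the $o(t)$ estimate of Definition~4.3 holds uniformly over $S(\widehat{x}_0)$ for that $v$) is correct and, if anything, more careful than what the paper records.
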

    \begin{theorem}
    	\label{eRMP}
    Assume the cost function $c$ is strongly coercive and locally Lipschitz with respect to the first variable and $\mu_0 \ll \mathcal{L}$. If for $\mu_0$-almost every $\widehat{x}_0 \in \mathcal{M}_0$, the  mapping $(\widehat{x}_1,\cdots,\widehat{x}_K) \mapsto D_{v,\,\widehat{x}_0}c(\omega)$ is continuous and injective on $S(\widehat{x}_0)$ for every $v \in T_{\widehat{x}_0}\mathcal{M}_0$. Then there has unique Monge solution of (\ref{RK}) and $D_{v,\,\widehat{x}_0}c(\omega)=D_{v,\,\widehat{x}_0}v_0(\widehat{x}_0)$ where $v_k$ is are Kantorovich duality functions.
    \end{theorem}
    \begin{proof}
    	SInce $c$ is  locally Lipschitz with respect to the first variable, clearly $v_0$ is also locally Lipschitz with respect to the first variable. Since ${\rm supp}(\pi^*)$ is closed set and separable, there exists a countable dense subset $D=\{\omega^1,\omega^2,\cdots\}\subset {\rm supp}(\pi^*)$. For each fixed $\omega^i=(\widehat{x}_0^i,\widehat{x}_1^i,\cdots,\widehat{x}_K^i)  \in D$, we define the function $f^{\omega^i}(y_0):=c(y_0,\widehat{x}_1^i,\cdots,\widehat{x}_K^i) -v_0(y_0)-\sum_{k=1}^K v_k(\widehat{x}_k)$ for $y_0 \in \mathcal{M}_0$. For each fixed $\omega^i=(\widehat{x}_0,\widehat{x}_1^i,\cdots,\widehat{x}_K^i) \in D$,  by Rademacher's theorem (\cite[Theorem 3.1]{MR2177410}), there exists a set $N_{\omega^i}\subset \mathcal{M}_0$ with $\mu_0(N_{\omega^i})=0$ such that for any $y_0 \in \mathcal{M}_0 \setminus N_{\omega^i}$, $f^{\omega^i}$ is differentiable at $y_0$, i.e. for any $v \in T_{\widehat{x}_0}\mathcal{M}_0$, $f^{\omega^i}({\rm exp}_{\widehat{x}_0}(tv))=f^{\omega^i}(\widehat{x}_0) + D_{v,\,\widehat{x}_0} f^{\omega^i} +o(t)$. If $\widehat{x}_0 \in T_0({\rm supp}(\pi^*)) \setminus N_{\omega^i}$, since $f^{\omega^i}(y_0)\geq 0$ for any $y_0 \in \mathcal{M}_0$ and $f^{\omega^i}(\widehat{x}_0)=0$, then  $D_{v,\,\widehat{x}_0} f^{\omega^i}=0$. Therefore, for any $v \in T_{\widehat{x}_0}\mathcal{M}_0$,
    	\[
    	f^{\omega^i}({\rm exp}_{\widehat{x}_0}(tv))=c({\rm exp}_{\widehat{x}_0}(tv),\widehat{x}_1^i,\cdots,\widehat{x}_K^i) - v_0({\rm exp}_{\widehat{x}_0}(tv))=o(t).
    	\]
    	Put $\displaystyle N_D:=\bigcup_{i=0}^\infty N_{\omega^i}$, since each $N_{\omega^i}$ has $\mu_0$-measure zero and the union is countable, we have $\mu_0(N_D)=0$. For every $ \widehat{x}_0 \in T_0({\rm supp}(\pi^*)) \setminus N_D$, $v \in T_{\widehat{x}_0} \mathcal{M}_0$ and every $\omega^i =(\widehat{x}_0^i,\widehat{x}_1^i,\cdots,\widehat{x}_K^i)\in D$, we have
    	\[
    	c({\rm exp}_{\widehat{x}_0}(tv),\widehat{x}_1^i,\cdots,\widehat{x}_K^i) - v_0({\rm exp}_{\widehat{x}_0}(tv))=o(t).
    	\]
    	Since $D$ is dense in ${\rm supp}(\pi^*)$, for any $\omega \in S(\widehat{x}_0)$, there exists $\omega^n \in D$ such that $\omega^n \to \omega$ as $n \to \infty$. Therefore,
    	\[
    	    \lim_{n\to \infty} D_{v,\,\widehat{x}_0} c(\omega^n)= D_{v,\,\widehat{x}_0} c(\omega).
    	\]
    	Since for each $\omega^n \in D$, we have $ D_{v,\,\widehat{x}_0} c(\omega^n)=  D_{v,\,\widehat{x}_0} v_0(\widehat{x}_0)$, then $ D_{v,\,\widehat{x}_0} c(\omega)=  D_{v,\,\widehat{x}_0} v_0(\widehat{x}_0)$ for every  $\omega \in S(\widehat{x}_0)$. Since $(\widehat{x}_1,\cdots,\widehat{x}_K) \mapsto D_{v,\,\widehat{x}_0} c(\omega)$ is injective on $S(\widehat{x}_0)$, then $S(\widehat{x}_0)$ is a singleton. Hence (\ref{RK}) has unique Monge solution.
    \end{proof}
    \par We note that, although in this chapter we have assumed all spaces to be smooth Riemannian manifolds for convenience, the proofs of the existence and uniqueness of Monge solutions actually only require the initial space $\mathcal{M}_0$ to be a complete smooth Riemannian manifold. The other spaces $\mathcal{M}_1, \ldots, \mathcal{M}_K$ can be taken to be complete separable metric spaces (Polish spaces), as the key differential structures (differentiability along geodesics) and the twist condition are only needed for $\mathcal{M}_0$
   \par  Chapter 4 represents a concrete realization of the abstract framework from Chapter 3 within the context of smooth Riemannian manifolds. In fact, this framework admits natural extensions to more general geometric settings, including Alexandrov spaces (\cite{MR4734965} \cite{MR2442054}), RCD spaces (\cite{MR3205729} \cite{MR2984123}   \cite{MR3381131}), and sub-Riemannian manifolds (\cite{MR2647137} \cite{MR1867362}). Notably, the verification of the local control condition in Theorem 4.3 relies on suitable generalizations of Rademacher's theorem, which are available in these broader contexts, thereby enabling a seamless extension of the HJMOT framework to these more general geometric spaces.

   \section*{Acknowledgments}
   The author would like to express his deepest gratitude to Professor Kazuhiro Kuwae, 
   for his supervision, invaluable guidance, correction, and continuous support throughout the preparation of this work. 
   The author is also grateful to Dr.~Ludovico Marini for his constructive remarks and helpful discussions, 
   and to Professors Syota Esaki and Ayato Mitsuishi for their valuable suggestions, as well as to Professor Jun Kitagawa for his valuable suggestions at the early stage of this work.
   
   \bibliographystyle{amsplain}
   \bibliography{ref.bib}

   \vfill
   \noindent Zijian Xu \
   Department of Applied Mathematics, Fukuoka University, Fukuoka 814-0180, Japan \
   Email: xuzijian823@gmail.com
   
\end{document}